\numberwithin{equation}{section}
\newtheorem{theorem}[subsection]{Theorem}
\newtheorem{lemma}[subsection]{Lemma}
\newtheorem{proposition}[subsection]{Proposition}
\newtheorem{corollary}[subsection]{Corollary}
\theoremstyle{definition}
\newtheorem{remark}[subsection]{Remark}
\newtheorem{definition}[subsection]{Definition}
\newtheorem{ex}[subsection]{Example}
\theoremstyle{property}
\newcommand{\vast}{\bBigg@{2}}
\newcommand{\Vast}{\bBigg@{3}}
\begin{document}

\title{Some Hermite-Hadamard Type Inequalities For Harmonically $(s,m)$-convex functions in Second Sense}
\author{Imran Abbas Baloch, $\dot{I}$mdat $\dot{I}$\c{s}can}
\address{Imran Abbas Baloch\\Abdus Salam School of Mathematical Sciences\\
GC University, Lahore, Pakistan}\email{iabbasbaloch@gmail.com\\iabbasbaloch@sms.edu.pk}

\address{$\dot{I}$mdat $\dot{I}$\c{s}can\\ Department of Mathematics, Faculty of Arts and Sciences,\\
 Giresun University, 28200, Giresun, TURKEY}
\email{imdat.iscan@giresun.edu.tr}

 \subjclass[2010]{Primary: 26D15. Secondary: 26A51}

 \keywords{Harmonically $(s,m$)-convex function, Hermite-Hadamard type inequalities}

\begin{abstract} Authors introduce the concept of harmonically $(s,m)$-convex functions in second sense in \cite{II}.In this article, we establish some Hermite-Hadamard type inequalities of this class of functions.
\end{abstract}

\maketitle

\section{\bf{Introduction}}
Let $f:I \subseteq \mathbb{R} \rightarrow \mathbb{R}$ be a convex function and $a,b \in I$ with $a < b$. Then following inequality
\begin{equation}\label{1}
f \vast( \frac{a + b}{2} \vast) \leq \frac{1}{b - a} \int_{a}^{b} f(x) dx \leq \frac{f(a) + f(b)}{2}
\end{equation}
holds. This double inequality is known in the literature as the Hermite-Hadamard integral inequality for convex function. Note that some of the classical inequalities for mean can be derived from (\ref{1}) for the appropriate particular selection of mapping $f$.Both inequalities in (\ref{1}) hold in the reverse direction if $f$ is concave.\\
In \cite{Ak}, Imdat Iscan introduced the concept of harmonically convex function, and established a variant od Hermite-Hadamard type inequalities which holds for these classes of functions as follows:\\
\begin{definition}
Let $I \subset \mathbb{R}/ \{0\}$ be a real interval. A function $f:I \rightarrow \mathbb{R}$ is said to be harmonically convex, if
\begin{equation}\label{2}
f\vast( \frac{xy}{tx + (1 - t)y} \vast) \leq tf(y) + (1 - t)f(x)
\end{equation}
for all $x,y \in I$ and $t \in [0,1]$. If inequality in (\ref{2}) is reversed, then $f$ is said to be harmonically concave.
\end{definition}
\begin{theorem}(see \cite{Ak})
Let $f:I \subset \mathbb{R}/ \{0\} \rightarrow \mathbb{R}$ be harmonically convex and $a,b \in I$ with $a < b$. If $f \in L[a,b]$, then following inequalities hold
\begin{equation}
f \big( \frac{2ab}{a + b}\big) \leq \frac{ab}{ b - a}\int_{a}^{b} \frac{f(x)}{x^{2}}dx \leq \frac{f(a) + f(b)}{2}.
\end{equation}
\end{theorem}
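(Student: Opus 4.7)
The plan is to pass from the $x$-integral on $[a,b]$ to an integral over $[0,1]$ via the harmonic parameterization, so that the inequality reduces to two pointwise applications of the defining inequality (\ref{2}) followed by integration in $t$.

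First I would carry out the substitution $x = \frac{ab}{ta + (1-t)b}$, which sends $t=0$ to $x=a$ and $t=1$ to $x=b$. A short computation gives $\frac{dx}{x^{2}} = \frac{b-a}{ab}\,dt$, hence
\[
\frac{ab}{b-a}\int_{a}^{b}\frac{f(x)}{x^{2}}\,dx \;=\; \int_{0}^{1} f\!\left(\frac{ab}{ta+(1-t)b}\right) dt.
\]
With this identity in hand, both inequalities become statements about the integrand on $[0,1]$.

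For the right-hand inequality, I would apply the harmonic convexity inequality (\ref{2}) directly with $x=a$, $y=b$, obtaining
\[
f\!\left(\frac{ab}{ta+(1-t)b}\right) \leq t\,f(b) + (1-t)\,f(a),
\]
and integrate over $t\in[0,1]$; the right-hand side integrates to $\frac{f(a)+f(b)}{2}$. For the left-hand inequality, the key trick is a $t \leftrightarrow 1-t$ symmetrization: set $u_t = \frac{ab}{ta+(1-t)b}$ and $v_t = \frac{ab}{(1-t)a+tb}$, and apply (\ref{2}) with $x=u_t$, $y=v_t$, and parameter $\tfrac12$. A quick check gives $\frac{1}{u_t}+\frac{1}{v_t} = \frac{a+b}{ab}$, so the harmonic midpoint equals $\frac{2ab}{a+b}$, yielding
\[
f\!\left(\frac{2ab}{a+b}\right) \leq \tfrac{1}{2}\bigl[f(u_t) + f(v_t)\bigr].
\]
Integrating in $t$ and noting that the $v_t$-integral equals the $u_t$-integral after the change $s = 1-t$ recovers exactly the middle expression in (\ref{3}).

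The only nontrivial step is spotting the symmetrization that produces the harmonic mean $\frac{2ab}{a+b}$; once the identity $\frac{1}{u_t}+\frac{1}{v_t} = \frac{a+b}{ab}$ is observed, everything else is a routine change of variables and integration. No additional regularity beyond $f \in L[a,b]$ is needed, because harmonic convexity on the compact interval $[a,b]$ makes the integrand bounded and measurable, so all integrals above are finite and the inequalities pass under the integral sign.
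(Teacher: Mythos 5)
Your proposal is correct; the paper itself states this theorem without proof (citing \cite{Ak}), and your argument — the substitution $x=\frac{ab}{ta+(1-t)b}$ to rewrite the integral, the direct use of (\ref{2}) for the right-hand bound, and the $t\leftrightarrow 1-t$ symmetrization at parameter $\tfrac12$ giving the harmonic mean $\frac{2ab}{a+b}$ for the left-hand bound — is essentially the same proof as in the cited source.
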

\begin{theorem}(see \cite{Ak}).\label{I1}
Let $f:I \subset (0,\infty) \rightarrow \mathbb{R}$ be a differentiable on $I^{\circ}$,$a,b \in I$ with $a < b$ and $f' \in L[a,b]$. If $|f'|^{q}$ is harmonically convex on $[a,b]$ for $q \geq 1$, then
 \begin{equation}
 \vast| \frac{f(a) + f(b)}{2} - \frac{ab}{b - a} \int_{a}^{b} \frac{f(x)}{x^{2}}dx \vast| \leq \frac{ab(b - a)}{2} \lambda_{1}^{1 - \frac{1}{q}}\big(\lambda_{2} |f'(a)|^{q} + \lambda_{3} |f'(b)|^{q}\big)^{\frac{1}{q}},
 \end{equation}
  where
  $$\lambda_{1} = \frac{1}{ab} - \frac{2}{(b - a)^{2}}\ln \vast( \frac{(a + b)^{2}}{4ab}\vast), $$
  $$ \lambda_{2} = \frac{-1}{b(b - a)} + \frac{3a + b}{(b - a)^{3}} \ln \vast( \frac{(a + b)^{2}}{4ab}\vast), $$
  $$ \lambda_{3} =  \frac{1}{a(b - a)} + \frac{3a + b}{(b - a)^{3}} \ln \vast( \frac{(a + b)^{2}}{4ab}\vast) .$$
\end{theorem}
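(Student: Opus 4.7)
My plan is to follow the standard route for proving Hermite--Hadamard type inequalities for classes of convex functions: first derive an integral identity that rewrites the left-hand side as a single integral involving $f'$, then apply the power-mean (or Hölder) inequality, and finally use the harmonic convexity of $|f'|^{q}$ to obtain pointwise bounds that can be integrated explicitly.

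\textbf{Step 1 (Integral identity).} I would first establish an auxiliary lemma of the form
\begin{equation*}
\frac{f(a)+f(b)}{2} - \frac{ab}{b-a}\int_{a}^{b}\frac{f(x)}{x^{2}}dx
= \frac{ab(b-a)}{2}\int_{0}^{1} \frac{1-2t}{(tb+(1-t)a)^{2}}\, f'\!\left(\frac{ab}{tb+(1-t)a}\right)dt.
\end{equation*}
The natural way to verify this is to change variables via $x=\frac{ab}{tb+(1-t)a}$, so $dx=\frac{ab(b-a)}{(tb+(1-t)a)^{2}}dt$, which converts $\frac{ab}{b-a}\int_{a}^{b}\frac{f(x)}{x^{2}}dx$ into $\int_{0}^{1} f\!\left(\frac{ab}{tb+(1-t)a}\right)dt$, and then integrate by parts (splitting $[0,1]$ at $t=1/2$ if necessary, since the kernel $1-2t$ arises from choosing the antiderivative in each half that matches $\frac{a+b}{2}\cdot\frac{1}{ab}$ at the endpoints). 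This gives the identity in one line.

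\textbf{Step 2 (Power-mean inequality).} With the identity in hand, I would take absolute values and apply the power-mean inequality to the integral on $[0,1]$ with weight $w(t)=\frac{|1-2t|}{(tb+(1-t)a)^{2}}$:
\begin{equation*}
\left|\int_{0}^{1} w(t)\, f'(\cdot)\, dt\right|
\le \left(\int_{0}^{1} w(t)\,dt\right)^{1-\frac{1}{q}}
\left(\int_{0}^{1} w(t)\,\bigl|f'(\cdot)\bigr|^{q}\,dt\right)^{\frac{1}{q}}.
\end{equation*}
This is where the factor $\lambda_{1}^{1-1/q}$ will come from, provided $\lambda_{1}=\int_{0}^{1} w(t)\,dt$.

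\textbf{Step 3 (Applying harmonic convexity).} Since $|f'|^{q}$ is harmonically convex on $[a,b]$, and $\frac{ab}{tb+(1-t)a}$ is exactly the harmonic interpolation between $a$ and $b$, inequality (\ref{2}) yields
\begin{equation*}
\left|f'\!\left(\frac{ab}{tb+(1-t)a}\right)\right|^{q}\le t\,|f'(b)|^{q}+(1-t)\,|f'(a)|^{q}.
\end{equation*}
Substituting this into the second factor of the power-mean bound reduces the problem to evaluating
\begin{equation*}
\lambda_{1}=\int_{0}^{1}\frac{|1-2t|}{(tb+(1-t)a)^{2}}dt,\qquad
\lambda_{2}=\int_{0}^{1}\frac{(1-t)|1-2t|}{(tb+(1-t)a)^{2}}dt,\qquad
\lambda_{3}=\int_{0}^{1}\frac{t\,|1-2t|}{(tb+(1-t)a)^{2}}dt.
\end{equation*}

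\textbf{Step 4 (Computing $\lambda_{1},\lambda_{2},\lambda_{3}$).} The remaining work is purely computational: split each integral at $t=1/2$ to remove the absolute value, then use the antiderivatives of $\frac{1}{(tb+(1-t)a)^{2}}$, $\frac{t}{(tb+(1-t)a)^{2}}$, and $\frac{t^{2}}{(tb+(1-t)a)^{2}}$ with respect to $t$, which produce expressions involving $\frac{1}{a+b}$, $\frac{1}{a}$, $\frac{1}{b}$, and a logarithm $\ln\!\frac{(a+b)^{2}}{4ab}$ coming from integrating $\frac{1}{tb+(1-t)a}$ over each half. I expect the main obstacle of the proof to lie here: the bookkeeping of the two halves and the simplification into the stated closed forms for $\lambda_{1},\lambda_{2},\lambda_{3}$ is where sign errors are most likely. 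Once these three integrals are evaluated and matched against the formulas in the statement, combining Steps 2 and 3 yields exactly the claimed inequality.
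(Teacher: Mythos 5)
Your overall plan is the same as the paper's: the paper does not prove this cited theorem directly, but recovers it as the case $s=m=1$ of Theorem \ref{II3}, whose proof uses exactly your ingredients — the integral identity of the Lemma, the power-mean inequality with the weight $w(t)=\frac{|1-2t|}{(tb+(1-t)a)^{2}}$, and then the convexity bound plus explicit evaluation of the resulting integrals. Your Steps 1 and 2 are fine; in particular $\int_{0}^{1}\frac{|1-2t|}{(tb+(1-t)a)^{2}}\,dt$ does evaluate to the stated $\lambda_{1}$.

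The genuine error is in Step 3. With the paper's definition of harmonic convexity, the point $\frac{ab}{tb+(1-t)a}$ satisfies $\big(\frac{ab}{tb+(1-t)a}\big)^{-1}=\frac{t}{a}+\frac{1-t}{b}$ (at $t=1$ the point is $a$), so the correct bound is $\big|f'\big(\frac{ab}{tb+(1-t)a}\big)\big|^{q}\le t\,|f'(a)|^{q}+(1-t)\,|f'(b)|^{q}$; the inequality you wrote, $t\,|f'(b)|^{q}+(1-t)\,|f'(a)|^{q}$, is the bound for the \emph{other} point $\frac{ab}{ta+(1-t)b}$ and is not implied here. This is exactly how the paper applies it in the proofs of Theorems \ref{II2} and \ref{II3} (the weight $t^{s}$ goes with $|f'(a)|^{q}$). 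As a consequence your identifications in Step 3 are interchanged: the coefficient of $|f'(a)|^{q}$ must be $\int_{0}^{1}\frac{t\,|1-2t|}{(tb+(1-t)a)^{2}}dt$, which is the integral that evaluates to the stated $\lambda_{2}=\frac{-1}{b(b-a)}+\frac{3a+b}{(b-a)^{3}}\ln\frac{(a+b)^{2}}{4ab}$, while the coefficient of $|f'(b)|^{q}$ is $\int_{0}^{1}\frac{(1-t)|1-2t|}{(tb+(1-t)a)^{2}}dt$. With your pairing the final inequality has the coefficients of $|f'(a)|^{q}$ and $|f'(b)|^{q}$ exchanged, which is a different (and in general false, relative to the claim) statement, and your matching in Step 4 would fail. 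The fix is a one-line correction of the convexity application. A side remark so you are not misled when checking: $\int_{0}^{1}\frac{(1-t)|1-2t|}{(tb+(1-t)a)^{2}}dt=\frac{1}{a(b-a)}-\frac{a+3b}{(b-a)^{3}}\ln\frac{(a+b)^{2}}{4ab}$, which together with $\lambda_{2}$ sums to $\lambda_{1}$; the formula for $\lambda_{3}$ as printed in the statement here carries a transcription error, so the mismatch at that point is not due to your computation.
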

\begin{theorem}(see \cite{Ak}).\label{I2}
Let $f:I \subset (0,\infty) \rightarrow \mathbb{R}$ be a differentiable function on $I^{\circ}$, $a,b \in I$ with $a < b$ and $f' \in [a,b]$. If $|f'|^{q}$ is harmonically convex on $[a,b]$ for $q > 1$, $\frac{1}{p} + \frac{1}{q} = 1$, then
\begin{equation}
\vast| \frac{f(a) + f(b)}{2} - \frac{ab}{b - a} \int_{a}^{b} \frac{f(x)}{x^{2}}dx \vast| \leq \frac{ab(b - a)}{2} (\frac{1}{p +1})^{\frac{1}{p}}\big(\mu_{1} |f'(a)|^{q} + \mu_{2} |f'(b)|^{q}\big)^{\frac{1}{q}},
\end{equation}
where
$$ \mu_{1} = \frac{[a^{2 - 2q} + b^{1 - 2q}[(b - a)(1 - 2q) - a]]}{2 (b - a)^{2} (1 - q)(1 - 2q)},$$

$$ \mu_{2} = \frac{[b^{2 - 2q} - a^{1 - 2q}[(b - a)(1 - 2q) + b]]}{2 (b - a)^{2} (1 - q)(1 - 2q)}. $$
\end{theorem}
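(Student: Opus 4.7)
The plan is to derive this estimate from an Iscan-type integral identity combined with H\"older's inequality. I would first isolate, as a preliminary lemma, the representation
\begin{equation*}
\frac{f(a)+f(b)}{2} - \frac{ab}{b-a}\int_a^b \frac{f(x)}{x^2}\,dx = \frac{ab(b-a)}{2}\int_0^1 \frac{1-2t}{(ta+(1-t)b)^2}\, f'\left(\frac{ab}{ta+(1-t)b}\right) dt,
\end{equation*}
which also underlies Theorem \ref{I1}; it is verified by the substitution $u = ab/(ta+(1-t)b)$ followed by an integration by parts that produces the boundary term $\tfrac{f(a)+f(b)}{2}$. Taking absolute values and writing the integrand on the right as $|1-2t|$ times $|f'(\cdot)|/(ta+(1-t)b)^2$, H\"older's inequality with conjugate exponents $p$ and $q$ yields the factor $\left(\int_0^1 |1-2t|^p\,dt\right)^{1/p} = (1/(p+1))^{1/p}$, which is exactly the advertised constant.

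For the remaining factor I would invoke the harmonic convexity of $|f'|^q$. Since $ab/(ta+(1-t)b)$ has the form $xy/(tx+(1-t)y)$ with $x=a$ and $y=b$, the defining inequality gives
\begin{equation*}
\left|f'\left(\frac{ab}{ta+(1-t)b}\right)\right|^q \leq t|f'(b)|^q + (1-t)|f'(a)|^q.
\end{equation*}
After substituting this bound and splitting, the problem reduces to evaluating
\begin{equation*}
\int_0^1 \frac{(1-t)\,dt}{(ta+(1-t)b)^{2q}} \qquad \text{and} \qquad \int_0^1 \frac{t\,dt}{(ta+(1-t)b)^{2q}},
\end{equation*}
each of which collapses under the linear substitution $u = ta+(1-t)b$ to an explicit combination of $\int_a^b u^{-2q}\,du$ and $\int_a^b u^{1-2q}\,du$.

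The main obstacle I anticipate is not analytic but algebraic: after evaluating these elementary integrals one must rearrange the result to match the closed forms of $\mu_1$ and $\mu_2$ stated in the theorem. This requires collecting the factors $(1-q)$ and $(1-2q)$ into the common denominator $2(b-a)^2(1-q)(1-2q)$ and recognizing rewritings such as $(b-a)(1-2q) + b = 2b - a - 2q(b-a)$ to bring the numerators into the stated form. Apart from this bookkeeping, the argument is a direct application of H\"older's inequality to Iscan's identity and introduces no further analytic subtlety; the hypothesis $q>1$ is used only to keep $(1-q)$ and $(1-2q)$ nonzero in the denominators, while $p<\infty$ ensures convergence of the $|1-2t|^p$ integral.
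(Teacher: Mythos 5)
Your proposal is correct and follows essentially the same route as the paper, which establishes the general harmonically $(s,m)$-convex version (Theorem \ref{II4}) from the same integral lemma via H\"older's inequality with $\left(\int_0^1|1-2t|^p\,dt\right)^{1/p}=(1/(p+1))^{1/p}$ and then recovers this statement at $s=m=1$; your elementary evaluation of $\int_0^1 (1-t)(ta+(1-t)b)^{-2q}dt$ and $\int_0^1 t\,(ta+(1-t)b)^{-2q}dt$ does reproduce exactly $\mu_1$ (attached to $|f'(a)|^q$) and $\mu_2$ (attached to $|f'(b)|^q$). One cosmetic point: with your parametrization $ta+(1-t)b$ the quoted identity holds with the opposite sign (the paper's lemma uses $tb+(1-t)a$), but this is immaterial once absolute values are taken.
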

In \cite{mnpj}, Imdat Iscan introduced the concept of harmonically $s$-convex function in second sense as follow:
\begin{definition}
A function $f:I \subset \mathbb{R}/ \{0\} \rightarrow \mathbb{R}$ is said to be harmonically $s$-convex in second sense , if
\begin{equation}\label{3}
f\vast( \frac{xy}{tx + (1 - t)y} \vast) \leq t^{s}f(y) + (1 - t)^{s}f(x)
\end{equation}
for all $x,y \in I$ and $t \in [0,1]$. If inequality in (\ref{3}) is reversed, then $f$ is said to be harmonically $s$-concave.
\end{definition}
\begin{remark}
Note that for $s = 1$, harmonically $s$-convexity reduces to ordinary harmonically convexity.
\end{remark}
In \cite{FU}, Feixiang Chen and Shanhe Wu generalized Hermite-Hadamard type inequalities given in \cite{Ak} which hold for harmonically $s$-convex functions in second sense.
\begin{theorem}(see \cite{FU}).\label{FS1}
Let $f:I \subset (0,\infty) \rightarrow \mathbb{R}$ be a differentiable on $I^{\circ}$ such that $f' \in L[a,b]$, where $a,b \in I^{\circ}$ with $a < b$. If $|f'|^{q}$ is harmonically $s$- convex in second sense on $[a,b]$ for some fixed $s \in (0,1]$, $q \geq 1$, then
 $$\vast| \frac{f(a) + f(b)}{2} - \frac{ab}{b - a} \int_{a}^{b} \frac{f(x)}{x^{2}}dx \vast| \leq \frac{ab(b - a)}{2} C_{1}^{1 - \frac{1}{q}}(a,b)[C_{2}(s;a,b) |f'(a)|^{q} + C_{3}(s;a,b) |f'(b)|^{q}]^{\frac{1}{q}},$$
 where
 $$ C_{1}(a,b) = b^{-2}\vast( {}_{2}F_{1} (2,2;3,1 - \frac{a}{b}) - {}_{2}F_{1}(2,1;2,1 - \frac{a}{b}) + \frac{1}{2} {}_{2}F_{1}(2,1;3, \frac{1}{2}(1 - \frac{a}{b})   \vast)$$
 $$ C_{2}(s,a,b) = b^{-2} \vast( \frac{2}{s + 2} {}_{2}F_{1} (2,s + 2;s + 3,1 - \frac{a}{b}) - \frac{1}{s + 1} {}_{2}F_{1}(2,s + 1;s + 2,1 - \frac{a}{b}) $$
 $$+ \frac{1}{2^{s}(s + 1)(s + 2)} {}_{2}F_{1} (2,s + 1;s + 3,\frac{1}{2}(1 - \frac{a}{b})) \vast)$$

$$ C_{3}(s,a,b) = b^{-2} \vast( \frac{2}{(s + 1)(s + 2)} {}_{2}F_{1} (2,2;s + 3,1 - \frac{a}{b}) - \frac{1}{s + 1} {}_{2}F_{1}(2,1;s + 2,1 - \frac{a}{b}) $$
 $$+ \frac{1}{2} {}_{2}F_{1} (2,1;3,\frac{1}{2}(1 - \frac{a}{b})) \vast)$$
\end{theorem}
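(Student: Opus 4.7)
The plan is to apply the same three-step Iscan scheme (integral identity $+$ power-mean inequality $+$ convexity bound on $|f'|^{q}$) used in the proof of Theorem~\ref{I1}, but now with the harmonic $s$-convexity hypothesis in place of harmonic convexity, and then to evaluate in closed form the three weight integrals that appear.

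\emph{Step 1: identity.} The change of variable $x = \frac{ab}{tb+(1-t)a}$ in $\int_a^b f(x)/x^{2}\,dx$ followed by integration by parts against an antiderivative of $1-2t$ yields the same integral identity that underlies Theorem~\ref{I1}, namely
$$\frac{f(a)+f(b)}{2} - \frac{ab}{b-a}\int_a^b \frac{f(x)}{x^{2}}\,dx = \frac{ab(b-a)}{2}\int_0^{1} \frac{1-2t}{(tb+(1-t)a)^{2}}\, f'\!\left(\frac{ab}{tb+(1-t)a}\right)dt.$$

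\emph{Step 2: power-mean and $s$-convexity.} Taking absolute values and applying the power-mean inequality (valid since $q \geq 1$) against the positive weight $\omega(t) = |1-2t|/(tb+(1-t)a)^{2}$, then using harmonic $s$-convexity of $|f'|^{q}$ in the form
$$\left|f'\!\left(\frac{ab}{tb+(1-t)a}\right)\right|^{q} \leq t^{s}\,|f'(a)|^{q} + (1-t)^{s}\,|f'(b)|^{q}$$
(take $x = b$, $y = a$ in Definition~(\ref{3})), reduces the problem to identifying the three weight integrals
$$\int_0^1 \omega(t)\,dt,\qquad \int_0^1 t^{s}\omega(t)\,dt,\qquad \int_0^1 (1-t)^{s}\omega(t)\,dt$$
with the stated constants $C_{1}(a,b)$, $C_{2}(s;a,b)$, $C_{3}(s;a,b)$.

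\emph{Step 3: evaluation in ${}_2F_{1}$.} Writing $tb+(1-t)a = b\bigl(1-z(1-t)\bigr)$ with $z = 1 - a/b$ and substituting $v = 1-t$ brings each integral to the standard form $b^{-2}\int_0^1 |1-2v|\,\varphi(v)\,(1-zv)^{-2}\,dv$ with $\varphi \in \{1,\,(1-v)^{s},\,v^{s}\}$. Splitting at $v = 1/2$ and re-expressing the $[1/2,1]$-piece through the identity $\int_{1/2}^{1} = \int_0^1 - \int_0^{1/2}$ writes each $C_{i}$ as a linear combination of a ``bulk'' Euler integral
$$\int_0^1 v^{c-1}(1-v)^{d-c-1}(1-zv)^{-2}\,dv = B(c,d-c)\,{}_2F_{1}\!\left(2,c;d;z\right),$$
which yields the two ${}_2F_{1}(\cdots;1 - a/b)$-terms, and a correction integral on $[0,1/2]$ which, after the linear rescaling $u = 2v$, becomes an Euler integral with argument halved, giving the ${}_2F_{1}\bigl(\cdots;\tfrac{1}{2}(1 - a/b)\bigr)$-term. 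The main technical obstacle is the bookkeeping: each $C_{i}$ decomposes into three hypergeometric terms with distinct Gauss parameter triples because $|1-2v|$ forces the split at $v = 1/2$ (and hence the rescaling by $2$), while the factor $\varphi$ shifts the Euler parameters by~$s$. No analytic ingredient beyond the Euler representation of ${}_2F_{1}$ is needed, but careful matching of beta-function normalisations against the stated coefficients is essential.
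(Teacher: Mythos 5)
Your Steps 1 and 2 are exactly the scheme this paper relies on: Theorem \ref{FS1} is not proved here directly but is recovered as the $m=1$ corollary of Theorem \ref{II3}, whose proof is the same integral identity, then the power-mean inequality taken against the weight $|1-2t|/(tb+(1-t)a)^{2}$, then the $s$-convexity bound on $|f'|^{q}$. The genuine gap is in your Step 3. After the flip $v=1-t$ the two endpoint coefficients are $b^{-2}\int_{0}^{1}|1-2v|(1-v)^{s}(1-zv)^{-2}dv$ (weight $t^{s}$, i.e.\ the $|f'(a)|^{q}$ term) and $b^{-2}\int_{0}^{1}|1-2v|v^{s}(1-zv)^{-2}dv$ (weight $(1-t)^{s}$), with $z=1-\frac{a}{b}$. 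Your recipe (split at $\frac12$, use $\int_{1/2}^{1}=\int_{0}^{1}-\int_{0}^{1/2}$, rescale $u=2v$) does work for $\varphi=1$ and $\varphi=v^{s}$, and indeed reproduces the printed $C_{1}$ and $C_{2}$ term by term; but for $\varphi=(1-v)^{s}$ the $[0,\frac12]$ correction becomes $\int_{0}^{1}(1-u)\,(1-\frac{u}{2})^{s}\,(1-\frac{z}{2}u)^{-2}du$, and the extra factor $(1-\frac{u}{2})^{s}$ is not a power of $u$ or of $1-u$, so this is not an Euler integral and produces no ${}_{2}F_{1}(\,\cdot\,;\frac12(1-\frac{a}{b}))$ term. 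The paper's own computation of $\rho_{1}(s,q;a,b)$ in Theorem \ref{II2} shows the needed fix: on $[0,\frac12]$ one re-centers at $\frac{a+b}{2}$, writing $tb+(1-t)a=\frac{a+b}{2}\bigl(1-(1-w)(1-\frac{2a}{a+b})\bigr)$, which turns that piece into a beta-type integral and yields a ${}_{2}F_{1}$ with argument $1-\frac{2a}{a+b}$ and prefactor $(a+b)^{-2}$ (in general $(a+b)^{-2q}$), not argument $\frac12(1-\frac{a}{b})$.

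A second, related problem is that you cannot literally "match the beta-function normalisations against the stated coefficients," because the constants as transcribed in this statement are garbled. Numerically at $s=1$, $a=1$, $b=2$ one gets $C_{2}\approx 0.1755$, which equals $\int_{0}^{1}|1-2t|(1-t)(tb+(1-t)a)^{-2}dt=\lambda_{3}$, the coefficient that belongs to $|f'(b)|^{q}$, even though the statement pairs $C_{2}$ with $|f'(a)|^{q}$ (whose true coefficient is $\lambda_{2}\approx 0.0889$); and $C_{3}\approx 0.116$ equals neither weight integral, its last term $\frac12\,{}_{2}F_{1}(2,1;3;\frac12(1-\frac{a}{b}))$ having lost its $s$-dependence. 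So in executing Step 3 you should derive the correct closed forms (they are $\rho_{1},\rho_{2}$ of Theorem \ref{II2} with $2q$ replaced by $2$, suitably paired) rather than aim to reproduce the printed $C_{2},C_{3}$ exactly.
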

\begin{remark}
Note that for $s = 1$, $ C_{1}(a,b) = \lambda_{1}$, $C_{2}(1,a,b) = \lambda_{2}$ and $C_{3}(1,a,b) = \lambda_{3}$. Hence, Theorem \ref{I1} is particular case of theorem \ref{FS1} for $s = 1$.
\end{remark}
\begin{theorem}(see \cite{FU})\label{FS2}
Let $f:I \subset (0,\infty) \rightarrow \mathbb{R}$ be a differentiable on $I^{\circ}$ such that $f' \in L[a,b]$, where $a,b \in I$ with $a < b$. If $|f'|^{q}$ is harmonically $s$- convex in second sense on $[a,b]$ for some fixed $s \in (0,1]$, $q > 1$, then
 $$\vast| \frac{f(a) + f(b)}{2} - \frac{ab}{b - a} \int_{a}^{b} \frac{f(x)}{x^{2}}dx \vast| $$
 $$\leq \frac{a(b - a)}{2b} (\frac{1}{p + 1})^{\frac{1}{p}} \vast[\frac{1}{s + 1} [{}_{2}F_{1}(2q,s + 1;s + 2,1 - \frac{a}{b})|f'(b)|^{q} + {}_{2}F_{1}(2q,1;s + 2,1 - \frac{a}{b}) |f'(a)|^{q}]\vast]^{\frac{1}{q}}$$
\end{theorem}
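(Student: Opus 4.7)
The plan is to start from the standard Iscan-type integral identity (the same lemma used to establish Theorem \ref{I1} and Theorem \ref{FS1}), namely
\[
\frac{f(a)+f(b)}{2}-\frac{ab}{b-a}\int_{a}^{b}\frac{f(x)}{x^{2}}dx \;=\; \frac{ab(b-a)}{2}\int_{0}^{1}\frac{1-2t}{(ta+(1-t)b)^{2}}\,f'\!\left(\tfrac{ab}{ta+(1-t)b}\right)dt,
\]
which I would quote as an already-established lemma. Taking absolute values inside and writing $A_{t}=ta+(1-t)b$, the task reduces to bounding $\int_{0}^{1}\frac{|1-2t|}{A_{t}^{2}}\bigl|f'\!\left(\tfrac{ab}{A_{t}}\right)\bigr|\,dt$.

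Next I would split this integrand as $|1-2t|\cdot\frac{1}{A_{t}^{2}}\bigl|f'(\tfrac{ab}{A_{t}})\bigr|$ and apply H\"older's inequality with conjugate exponents $p,q$. The first factor produces $\left(\int_{0}^{1}|1-2t|^{p}dt\right)^{1/p}=\left(\tfrac{1}{p+1}\right)^{1/p}$, accounting for the first factor in the conclusion. The second factor is $\left(\int_{0}^{1}\frac{1}{A_{t}^{2q}}\bigl|f'(\tfrac{ab}{A_{t}})\bigr|^{q}dt\right)^{1/q}$, and here I would invoke the harmonic $(s)$-convexity of $|f'|^{q}$ with $x=a$, $y=b$, which yields
\[
\Bigl|f'\!\left(\tfrac{ab}{ta+(1-t)b}\right)\Bigr|^{q} \leq t^{s}|f'(b)|^{q}+(1-t)^{s}|f'(a)|^{q}.
\]

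The final routine step is to recognize the two resulting integrals as Euler-type hypergeometric integrals. Pulling out $b^{-2q}$ from $A_{t}^{2q}=b^{2q}(1-t(1-a/b))^{2q}$ and using the identity ${}_{2}F_{1}(\alpha,\beta;\gamma;z)=\frac{\Gamma(\gamma)}{\Gamma(\beta)\Gamma(\gamma-\beta)}\int_{0}^{1}t^{\beta-1}(1-t)^{\gamma-\beta-1}(1-zt)^{-\alpha}dt$, I would take $\beta=s+1$, $\gamma=s+2$, $\alpha=2q$, $z=1-a/b$ for the $t^{s}$ integral, and $\beta=1$, $\gamma=s+2$ for the $(1-t)^{s}$ integral. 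Both give a $\frac{1}{s+1}$ prefactor, and combining with the $b^{-2q}$ pulled out collapses, under the outer $q$-th root, the $ab(b-a)/2$ factor into $a(b-a)/(2b)$, matching the stated inequality exactly.

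The main obstacle I foresee is purely bookkeeping: matching the parameters in the Euler integral representation so that the hypergeometric functions appear with exactly the arguments $(2q,s+1;s+2;1-a/b)$ and $(2q,1;s+2;1-a/b)$, and correctly factoring $b^{-2q/q}=b^{-2}$ out under the $q$-th root to produce the $a(b-a)/(2b)$ constant. Once the identity, H\"older, harmonic $(s,m)$-convexity, and the Euler representation are assembled in this order, no calculations beyond those already present in the proof of Theorem \ref{FS1} are required.
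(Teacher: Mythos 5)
Your proposal is correct and follows essentially the same route the paper uses: the quoted theorem is the $m=1$ case of Theorem \ref{II4}, whose proof is exactly your chain of the integral identity (Lemma), H\"older's inequality with exponents $p,q$ giving $(\tfrac{1}{p+1})^{1/p}$, harmonic $s$-convexity of $|f'|^{q}$, and the Euler integral representation of $_{2}F_{1}$ producing the two integrals (the paper's $\nu_{1},\nu_{2}$ with $\beta(1,s+1)=\beta(s+1,1)=\tfrac{1}{s+1}$ and the $b^{-2q}$ factor absorbed into the constant). Your parameterization $ta+(1-t)b$ versus the paper's $tb+(1-t)a$ only swaps the roles of $|f'(a)|^{q}$ and $|f'(b)|^{q}$ via $t\mapsto 1-t$, and your bookkeeping of the hypergeometric parameters and of $b^{-2}$ matches the stated constant $\tfrac{a(b-a)}{2b}$.
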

\begin{remark}
Note that for $s = 1$
$$ \mu_{1} = \frac{1}{2b^{2q}}. {}_{2}F_{1}(2q,2,3,1 - \frac{a}{b}), $$
and
$$ \mu_{2} =  \frac{1}{2b^{2q}}. {}_{2}F_{1}(2q,1,3,1 - \frac{a}{b}).$$
Hence, Theorem \ref{I2} is particular case of theorem \ref{FS2} for $s = 1$.
\end{remark}
In (\cite{MP}), Jaekeun Park considered the class of $(s,m)$-convex functions in second sense. This class of function is defined as follow
\begin{definition}
For some fixed $s \in (0,1]$ and $m \in [0,1]$ a mapping $f:I \subset [0,\infty) \rightarrow \mathbb{R}$ is said to be $(s,m)$-convex in the second sense on $I$ if
$$  f(tx + m(1 - t)y) \leq t^{s}f(x) + m (1 - t)^{s}f(y)  $$
holds, for all $x,y \in I$ and $t \in [0,1]$
\end{definition}
In (\cite{Akk}), Imdat Iscan introduced the concept of harmonically $(\alpha,m)$-convex functions and established some Hermite-Hadamard type inequalities for this class of function. This class of functions is defined as follow
\begin{definition}
The function $f:(0,\infty) \rightarrow \mathbb{R}$ is said to be harmonically $(\alpha,m)$-convex, where $\alpha \in [0,1]$ and $m \in (0,1]$, if
\begin{equation}\label{4}
 \big(\frac{mxy}{mty + (1 - t)x}\big) = f \big( (\frac{t}{x} + \frac{1 - t}{my})^{-1} \big) \leq t^{\alpha} f(x) + m (1 - t^{\alpha}) f(y)
\end{equation}
for all $x,y \in (0,\infty)$ and $t \in [0,1]$. If the inequality in (\ref{4}) is reversed, then $f$ is said to be harmonically $(\alpha,m)$-concave.
\end{definition}

In \cite{II} , authors introduce the concept of Harmonically $(s,m)$-convex functions in second sense which generalize the notion of Harmonically convex and Harmonically $s$-convex functions in second sense introduced by Imdat Iscan in \cite{Ak},\cite{mnpj}.\\
In this paper, we establish some results connected with the right side of new inequality similar to (\ref{1}) for this class of functions such that results given by Imdat Iscan \cite{Ak} , Feixiang Chen and Shanhe Wu \cite{FU} are obtained for the particular values of $s,m$.

\begin{definition}
The function $f: I \subset (0, \infty) \rightarrow \mathbb{R}$ is said to be harmonically $(s,m)$-convex in second sense, where $s \in (0,1]$ and $m \in (0,1]$ if
$$f \big(\frac{mxy}{mty + (1 - t)x}\big) = f \big( (\frac{t}{x} + \frac{1 - t}{my})^{-1} \big) \leq t^{s} f(x) + m (1 - t)^{s} f(y)$$
$\forall x, y \in I$ and $t \in [0,1]$.
\end{definition}
\begin{remark}
Note that for $s = 1$, $(s,m)$-convexity reduces to harmonically $m$-convexity and for $m = 1$, harmonically $(s,m)$-convexity reduces to harmonically $s$-convexity in second sense (see \cite{mnpj}) and for $s,m = 1$, harmonically $(s,m)$-convexity reduces to ordinary  harmonically convexity (see \cite{Ak}).
\end{remark}
\begin{proposition}\label{PP1}
Let $f:(0,\infty) \rightarrow \mathbb{R}$ be a function\\
a)  if $f$ is $(s,m)$-convex function in second sense and non-decreasing, then$f$ is harmonically $(s,m)$-convex function in second sense.\\
b)  if $f$ is harmonically $(s,m)$-convex function in second sense and non-increasing, then $f$ is $(s,m)$-convex function in second sense.\\
\begin{proof}
For all $t \in [0,1]$, $m \in (0,1)$ and $x,y \in I$, we have
$$t (1 - t) (x - my)^{2} \geq 0 $$
Hence, the following inequality holds
\begin{equation}\label{P1}
\frac{mxy}{mty + (1 - t)x} \leq tx + m(1 - t)y
\end{equation}
By the inequality (\ref{P1}), the proof is completed.
\end{proof}
\end{proposition}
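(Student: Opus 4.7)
The plan is to derive both implications from a single pointwise comparison between the harmonic-type weighted mean $\frac{mxy}{mty+(1-t)x}$ and the arithmetic-type weighted mean $tx+m(1-t)y$, and then use the monotonicity hypothesis on $f$ to shuttle inequalities between the two convexity definitions.

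The first step will be to establish the key inequality
$$\frac{mxy}{mty+(1-t)x}\leq tx+m(1-t)y.$$
Since $x,y>0$, $m\in(0,1]$ and $t\in[0,1]$, the denominator is strictly positive, so this is equivalent after clearing denominators to $mxy \leq (tx+m(1-t)y)(mty+(1-t)x)$. I would expand the right-hand side, collect the $xy$ terms, and use the identity $1-t^{2}-(1-t)^{2}=2t(1-t)$; the inequality should then collapse to the manifestly non-negative $t(1-t)(x-my)^{2}\geq 0$, which is precisely the hint the authors furnish.

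Granted this pointwise estimate, both parts follow by chaining two inequalities. For part (a), non-decreasing monotonicity applied to the estimate gives $f\!\left(\tfrac{mxy}{mty+(1-t)x}\right)\leq f(tx+m(1-t)y)$, and the $(s,m)$-convexity hypothesis bounds the right side by $t^{s}f(x)+m(1-t)^{s}f(y)$, producing exactly the harmonic $(s,m)$-convexity inequality. For part (b) the roles flip: $f$ non-increasing reverses the monotonicity step to $f(tx+m(1-t)y)\leq f\!\left(\tfrac{mxy}{mty+(1-t)x}\right)$, and the harmonic $(s,m)$-convexity hypothesis then supplies the desired upper bound $t^{s}f(x)+m(1-t)^{s}f(y)$.

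There is no genuine obstacle here — the entire proof rests on the one algebraic identity above, and the rest is bookkeeping. The only mild subtlety is keeping track of the direction of monotonicity in each part, which is what forces the opposite hypotheses (\emph{non-decreasing} in (a) versus \emph{non-increasing} in (b)).
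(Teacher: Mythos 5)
Your proposal is correct and follows exactly the paper's route: both rest on the single inequality $\frac{mxy}{mty+(1-t)x}\leq tx+m(1-t)y$, obtained from $t(1-t)(x-my)^{2}\geq 0$, followed by the monotonicity-plus-convexity chaining (which the paper leaves implicit but you spell out). No gaps.
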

\begin{remark}
According to proposition \ref{PP1}, every non-decreasing $(s,m)$-convex function in second sense is also harmonically $(s,m)$-convex function in second sense.
\end{remark}
\begin{ex}(see\cite{BK})
Let $0 < s < 1$ and $a, b, c \in \mathbb{R}$, then function $f:(0,\infty) \rightarrow \mathbb{R}$ defined by
$$  f(x) = \left\{
                                                            \begin{array}{ll}
                                                              a, & \hbox{$ x = 0$} \\
                                                              b x^{s} + c, & \hbox{$x > 0$}
                                                            \end{array}
                                                          \right.    $$
is non-decreasing $s$-convex function in second sense for $ b \geq 0$ and $ 0 \leq c \leq a$. Hence, by proposition \ref{PP1}, $f$ is harmonically $(s,1)$-convex function.
\end{ex}
\begin{proposition}
Let $s \in [0,1]$, $m \in (0,1]$, $f : [a, mb]\subset(0,\infty)  \rightarrow \mathbb{R}$, be an increasing function and $g : [a,mb] \rightarrow [a,mb]$, $g(x) = \frac{mab}{a + mb - x}$, $a < mb$. Then $f$ is harmonically $(s,m)$-convex in second sense on $[a,mb]$ if and only if $fog$ is $(s,m)$-convex in second sense on $[a,mb]$.
\end{proposition}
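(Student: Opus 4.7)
The plan is to use the bijection $g$ to translate harmonic $(s,m)$-convexity of $f$ on $[a,mb]$ into ordinary $(s,m)$-convexity of $f\circ g$ on the same interval, and conversely. First I would record the basic properties of $g\colon[a,mb]\to[a,mb]$: the formulas give $g(a)=\frac{mab}{mb}=a$ and $g(mb)=\frac{mab}{a}=mb$, and $g'(x)=\frac{mab}{(a+mb-x)^{2}}>0$ on $[a,mb]$, so $g$ is a strictly increasing bijection of $[a,mb]$ onto itself. In particular, for every $x,y\in[a,mb]$ there exist unique $u,v\in[a,mb]$ with $x=g(u)$, $y=g(v)$, and conversely.

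The conceptual core of the argument is the algebraic identity
\[
g\bigl(tu+m(1-t)v\bigr)\;=\;\left(\frac{t}{g(u)}+\frac{1-t}{m\,g(v)}\right)^{-1}\;=\;\frac{m\,g(u)\,g(v)}{mt\,g(v)+(1-t)\,g(u)},
\]
which interchanges the arithmetic $(s,m)$-combination of $u,v$ appearing in the $(s,m)$-convexity inequality for $f\circ g$ with the harmonic $(s,m)$-combination of $g(u),g(v)$ appearing in the harmonic $(s,m)$-convexity inequality for $f$. Granting this identity, the forward direction is immediate: for $u,v\in[a,mb]$ and $t\in[0,1]$, set $x=g(u)$, $y=g(v)$, apply the defining inequality of harmonic $(s,m)$-convexity of $f$ at $(x,y,t)$, and rewrite the left-hand side via the identity to obtain
\[
(f\circ g)(tu+m(1-t)v)\;\le\;t^{s}(f\circ g)(u)+m(1-t)^{s}(f\circ g)(v),
\]
which is precisely $(s,m)$-convexity of $f\circ g$. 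The backward implication follows from the same identity, starting from arbitrary $x,y\in[a,mb]$, writing $x=g(u)$, $y=g(v)$, and reading the calculation in reverse.

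The main obstacle is the verification of the intertwining identity itself, which reduces to a direct but somewhat delicate manipulation of the rational expression $g(w)=\frac{mab}{a+mb-w}$ after substituting $w=tu+m(1-t)v$; both sides must be brought to a common form by clearing denominators. The hypothesis that $f$ is increasing is used to transport inequalities through $f$ without reversing them (this becomes essential if the intertwining step requires any monotone correction when $m<1$), and the strict bound $a<mb$ together with the positivity of $a,b$ prevents the denominators appearing in $g$ and in the harmonic combination from vanishing.
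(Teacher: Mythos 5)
Your argument hinges on the intertwining identity
\begin{equation*}
g\bigl(tu+m(1-t)v\bigr)\;=\;\Bigl(\tfrac{t}{g(u)}+\tfrac{1-t}{m\,g(v)}\Bigr)^{-1},
\end{equation*}
and this identity is false whenever $m<1$. Writing $\frac{1}{g(w)}=\frac{a+mb-w}{mab}$, equality of the two sides amounts to
$a+mb-tu-m(1-t)v = t(a+mb-u)+\frac{(1-t)(a+mb-v)}{m}$, which after cancelling $(1-t)$ forces $(m-1)(a+mb)=(m^{2}-1)v$, i.e.\ either $m=1$ or $v=\frac{a+mb}{1+m}$. A concrete counterexample: take $m=\frac12$, $a=1$, $b=4$, so $g(x)=\frac{2}{3-x}$ on $[1,2]$; with $u=2$, $v=1$, $t=\frac12$ one gets $g\bigl(tu+m(1-t)v\bigr)=g(5/4)=\frac87$, while $\frac{m\,g(u)g(v)}{mt\,g(v)+(1-t)g(u)}=\frac45$. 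So the ``conceptual core'' of your proof only works in the case $m=1$ (the harmonically $s$-convex case, where the harmonic/arithmetic duality under $g(x)=\frac{ab}{a+b-x}$ is genuine), and both directions of the equivalence are unproved for $m<1$.

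For comparison, the paper's own proof uses only the endpoint identity $(f\circ g)\bigl(ta+m(1-t)b\bigr)=f\bigl(\frac{mab}{mtb+(1-t)a}\bigr)$, which is correct because $a+mb-ta-m(1-t)b=(1-t)a+mtb$, and then declares the equivalence obvious; your attempt to upgrade that identity from the single pair $(a,b)$ to arbitrary pairs $u,v\in[a,mb]$ is precisely where the argument breaks, and it cannot be repaired by the monotonicity of $f$ (which, as written, you never actually use: you only ever apply $f$ to two expressions you claim are equal). Two further points worth flagging in any corrected treatment: for $m<1$ the combination $tu+m(1-t)v$ can leave $[a,mb]$ (take $t=0$, $v=a$, giving $ma<a$), so the domain on which $(s,m)$-convexity of $f\circ g$ is even asserted needs care; and any valid proof of the stated equivalence must either restrict to the pairs where the two convexity conditions genuinely correspond under $g$ or argue quite differently from the change-of-variables scheme you propose.
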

\begin{proof}
Since
\begin{equation}\label{P2}
 (fog) ( ta + m(1 - t)b) = f \big( \frac{mab}{mbt + (1 - t)a}\big)
 \end{equation}
 for all $t \in [0,1]$, $m \in (0, 1]$. The proof is obvious from equality (\ref{P2}).
%
%
\end{proof}
The following result of the Hermite-Hadamard type holds.
\begin{theorem}\label{II1}
Let $f:I \subset (0,\infty) \rightarrow \mathbb{R}$ be a harmonically $(s,m)$-convex function in second sense with $s \in [0,1]$ and $m \in (0,1]$. If $ 0 < a < b < \infty$ and $f \in L[a,b]$, then one has following inequality

$$\frac{ab}{b - a} \int _{a}^{b} \frac{f(x)}{x^{2}} dx \leq \min \vast[ \frac{f(a) + m f(\frac{b}{m})}{s + 1}, \frac{f(b) + m f(\frac{a}{m})}{s + 1} \vast]    $$

\end{theorem}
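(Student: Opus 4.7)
The plan is to mimic the standard Hermite–Hadamard argument adapted to the harmonic setting: reduce the weighted integral on $[a,b]$ to an integral over $[0,1]$ via the harmonic substitution, and then apply the definition of harmonically $(s,m)$-convexity twice, in two ``dual'' ways, to obtain the two candidates inside the minimum.

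First I would substitute $x = \frac{ab}{ta+(1-t)b}$ for $t\in[0,1]$, which maps $t=1$ to $x=a$ and $t=0$ to $x=b$. A direct computation gives $\frac{dx}{x^2} = \frac{b-a}{ab}\,dt$, so that
\[
\frac{ab}{b-a}\int_a^b \frac{f(x)}{x^{2}}\,dx \;=\; \int_0^1 f\!\left(\frac{ab}{ta+(1-t)b}\right)dt.
\]
This is the only nontrivial calculation; everything after it is an application of the definition.

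Next I would match the integrand to the form $\frac{mXY}{mtY+(1-t)X}$ required by the definition. Taking $X=b$, $Y=a/m$ gives $mY=a$ and $\frac{mXY}{mtY+(1-t)X}=\frac{ab}{ta+(1-t)b}$, so harmonically $(s,m)$-convexity yields
\[
f\!\left(\frac{ab}{ta+(1-t)b}\right) \leq t^{s}f(b)+m(1-t)^{s}f\!\left(\frac{a}{m}\right).
\]
Integrating over $[0,1]$ and using $\int_0^1 t^s\,dt=\int_0^1(1-t)^s\,dt=\frac{1}{s+1}$ produces the bound $\frac{f(b)+mf(a/m)}{s+1}$.

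To get the other candidate, I would first make the change of variables $u=1-t$ in the integral so that it becomes $\int_0^1 f\!\left(\frac{ab}{ub+(1-u)a}\right)du$, and then apply the definition with the opposite choice $X=a$, $Y=b/m$, which produces $\frac{mXY}{muY+(1-u)X}=\frac{ab}{ub+(1-u)a}$ and therefore
\[
f\!\left(\frac{ab}{ub+(1-u)a}\right) \leq u^{s}f(a)+m(1-u)^{s}f\!\left(\frac{b}{m}\right).
\]
Integrating gives the bound $\frac{f(a)+mf(b/m)}{s+1}$, and taking the minimum of the two yields the stated inequality. The only real subtlety is identifying the correct matching $X,Y$ that turns the harmonic substitution into the canonical form in the definition; once those are spotted, everything else is essentially $\int_0^1 t^s\,dt=\frac{1}{s+1}$.
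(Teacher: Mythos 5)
Your proposal is correct and follows essentially the same route as the paper: apply the definition of harmonic $(s,m)$-convexity with the two pairings $(x,y)=(b,\tfrac{a}{m})$ and $(x,y)=(a,\tfrac{b}{m})$, integrate over $[0,1]$ using $\int_0^1 t^s\,dt=\int_0^1(1-t)^s\,dt=\tfrac{1}{s+1}$, and identify both resulting integrals with $\frac{ab}{b-a}\int_a^b \frac{f(x)}{x^2}\,dx$ via the harmonic substitution. The only blemish is a harmless slip in your endpoint correspondence (for $x=\frac{ab}{ta+(1-t)b}$ one has $t=0\mapsto x=a$ and $t=1\mapsto x=b$), but the integral identity you actually use downstream is the correct one.
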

\begin{proof}
Since, $f:I \subset (0,\infty) \rightarrow \mathbb{R}$ is a harmonically $(s,m)$-convex function in second sense. We have, for $x,y \in I \subset (0,\infty)$
$$f \big( \frac{xy}{ty + (1 - t)x}\big) =    f \big( \frac{mx\frac{y}{m}}{mt\frac{y}{m} + (1 - t)x}\big) \leq t^{s} f(x) + m(1 - t)^{s}f(\frac{y}{m})   $$
which gives
$$f \big( \frac{ab}{tb + (1 - t)a}\big) \leq t^{s} f(a) + m(1 - t)^{s}f(\frac{b}{m})   $$
and
$$f \big( \frac{ab}{ta + (1 - t)b}\big) \leq t^{s} f(b) + m(1 - t)^{s}f(\frac{a}{m})   $$
for all $t \in [0,1]$. Integrating on $[0,1]$ w.r.t 't', we obtain
$$\int_{0}^{1} f\big( \frac{ab}{tb + (1 - t)a} \big)dt \leq \frac{f(a) + m f(\frac{b}{m})}{s + 1}       $$
and
$$\int_{0}^{1} f\big( \frac{ab}{ta + (1 - t)b} \big)dt \leq \frac{f(b) + m f(\frac{a}{m})}{s + 1}       $$

However,

$$\int_{0}^{1} f\big( \frac{ab}{tb + (1 - t)a} \big)dt = \int_{0}^{1} f\big( \frac{ab}{tb + (1 - t)a} \big)dt  = \frac{ab}{b - a} \int _{a}^{b} \frac{f(x)}{x^{2}} dx  $$

Hence, required inequality is established.
\end{proof}
\begin{corollary}
If we take $m = 1$ in theorem \ref{II1}, then we get
$$\frac{ab}{b - a} \int _{a}^{b} \frac{f(x)}{x^{2}} dx \leq   \frac{f(a) +  f(b)}{s + 1}    $$

\end{corollary}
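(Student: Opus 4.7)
The plan is to apply Theorem~\ref{II1} directly and then specialize to $m = 1$, observing that the minimum on the right-hand side collapses to a single value. The key observation is that when $m = 1$, the expression $m f\!\left(\frac{b}{m}\right)$ reduces to $f(b)$ and likewise $m f\!\left(\frac{a}{m}\right)$ reduces to $f(a)$. Therefore the two quantities inside $\min[\,\cdot\,,\,\cdot\,]$ on the right-hand side of Theorem~\ref{II1} both become $\frac{f(a) + f(b)}{s+1}$, and the minimum equals this common value.

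It remains only to verify that the hypothesis of Theorem~\ref{II1} is actually available in the setting of this corollary. By the Remark immediately following the definition of harmonic $(s,m)$-convexity in the second sense, the special case $m = 1$ of that definition coincides with harmonic $s$-convexity in the second sense; hence if $f$ is harmonic $s$-convex in the second sense and lies in $L[a,b]$, it qualifies as a harmonic $(s,1)$-convex function and the premise of Theorem~\ref{II1} is met. Substituting into the conclusion of Theorem~\ref{II1} then yields exactly the claimed bound.

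There is essentially no technical obstacle: the proof is a one-line specialization. The only mild point worth flagging when writing it out is that setting $m = 1$ sends the arguments $b/m$ and $a/m$ back to $b$ and $a$ respectively, so no question about whether these points lie in the domain of $f$ arises, and the two previously distinct bounds in the minimum become identical.
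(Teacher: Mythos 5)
Your proposal is correct and is exactly the specialization the paper intends: setting $m=1$ in Theorem \ref{II1} turns both entries of the minimum into $\frac{f(a)+f(b)}{s+1}$, so the minimum is that common value. The paper offers no separate proof for this corollary, and your one-line substitution (plus the sanity check that $b/m$, $a/m$ return to $b$, $a$) is all that is needed.
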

\begin{corollary}
If we take $s = 1$ in theorem \ref{II1}, then we get
$$\frac{ab}{b - a} \int _{a}^{b} \frac{f(x)}{x^{2}} dx \leq   \min \vast[ \frac{f(a) + m f(\frac{b}{m})}{2}, \frac{f(b) + m f(\frac{a}{m})}{2} \vast]$$
\end{corollary}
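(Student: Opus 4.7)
The plan is to apply the defining inequality of harmonically $(s,m)$-convexity twice, in two symmetric ways (effectively swapping the roles of $a$ and $b$), integrate each resulting pointwise inequality in the parameter $t$ over $[0,1]$, and then identify both left-hand sides with the target integral via a change of variables. The minimum of the two resulting upper bounds will give the claimed inequality.

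To set up the first inequality, I would choose the parameters in the definition
$$
f\!\left(\frac{m x y}{mty+(1-t)x}\right) \le t^{s}f(x) + m(1-t)^{s}f(y)
$$
so that the argument of $f$ becomes $\frac{ab}{tb+(1-t)a}$. Taking $x=a$ and $y=\tfrac{b}{m}$ collapses $mxy$ to $ab$ and $mty+(1-t)x$ to $tb+(1-t)a$, giving
$$
f\!\left(\frac{ab}{tb+(1-t)a}\right) \le t^{s}f(a) + m(1-t)^{s}f\!\left(\tfrac{b}{m}\right).
$$
The symmetric choice $x=b$, $y=\tfrac{a}{m}$ yields
$$
f\!\left(\frac{ab}{ta+(1-t)b}\right) \le t^{s}f(b) + m(1-t)^{s}f\!\left(\tfrac{a}{m}\right).
$$

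Next I would integrate both inequalities in $t$ over $[0,1]$. Because $\int_{0}^{1}t^{s}\,dt = \int_{0}^{1}(1-t)^{s}\,dt = \tfrac{1}{s+1}$, the right-hand sides turn into $\tfrac{f(a)+mf(b/m)}{s+1}$ and $\tfrac{f(b)+mf(a/m)}{s+1}$, respectively. For each left-hand side I would then apply the substitution $u=\frac{ab}{tb+(1-t)a}$ (and $u=\frac{ab}{ta+(1-t)b}$ in the second case). A direct computation gives $dt=\mp\frac{ab}{(b-a)u^{2}}\,du$, and an endpoint check shows that in each case $u$ traverses $[a,b]$, so after accounting for orientation both integrals collapse to the same expression $\frac{ab}{b-a}\int_{a}^{b}\frac{f(x)}{x^{2}}\,dx$. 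The integrability hypothesis $f\in L[a,b]$ legitimizes these manipulations. Combining the two resulting bounds and retaining the smaller one completes the proof.

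The only mildly delicate step is the change of variables: one needs to track correctly that the two different-looking mappings $t\mapsto \frac{ab}{tb+(1-t)a}$ and $t\mapsto \frac{ab}{ta+(1-t)b}$ both parametrize $[a,b]$, with opposite orientations, so that the Jacobian factor $\frac{ab}{(b-a)u^{2}}$ and the sign conspire to yield the same integral $\frac{ab}{b-a}\int_{a}^{b}\frac{f(x)}{x^{2}}\,dx$ in both cases. Once this identification is in hand, the inequality is immediate.
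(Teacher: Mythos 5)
Your proposal is correct and follows essentially the same route as the paper: it reproduces the proof of Theorem \ref{II1} (two applications of the defining inequality with $x=a,\,y=\tfrac{b}{m}$ and $x=b,\,y=\tfrac{a}{m}$, integration in $t$, and the change of variables identifying both left-hand sides with $\frac{ab}{b-a}\int_a^b \frac{f(x)}{x^2}\,dx$), after which the corollary is just the specialization $s=1$, where $\int_0^1 t\,dt=\int_0^1(1-t)\,dt=\tfrac12$ gives the denominators $2$.
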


\section{\textbf{Main Results}}

For finding some new inequalities of Hermite-Hadamard type for the functions whose derivatives are harmonically $(s,m)$-convex in second sense, we need the following lemma
\begin{lemma}
Let $f : I \subset \mathbb{R} / \{0\} \rightarrow \mathbb{R}$ be a differentiable function on $I^{\circ}$ with $a < b$. If $f \in L[a,b]$, then
$$ \frac{f(a) +  f(b)}{2}  - \frac{ab}{b - a} \int _{a}^{b} \frac{f(x)}{x^{2}} dx = \frac{ab(b - a)}{2} \int_{0}^{1} \frac{1 - 2t}{(tb + (1 - t)a)} f' \big( \frac{ab}{(tb + (1 - t)a)}\big)dt    $$

\end{lemma}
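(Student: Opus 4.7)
The identity can be proven by integration by parts combined with a change of variables, following the standard pattern for Hermite–Hadamard type identities. The plan is to start from the integral on the right–hand side and work backwards to the left–hand side.

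First, I would recognize the substitution that governs the integrand. Set $g(t) := f\!\left(\frac{ab}{tb+(1-t)a}\right)$. By the chain rule,
$$ g'(t) \;=\; -\,\frac{ab(b-a)}{(tb+(1-t)a)^{2}}\,f'\!\left(\frac{ab}{tb+(1-t)a}\right).$$
Thus (assuming the factor $(tb+(1-t)a)$ in the statement is a typographical abbreviation for $(tb+(1-t)a)^{2}$, which is needed for the identity to be dimensionally correct) the integral on the right can be rewritten as
$$ \int_{0}^{1}\frac{1-2t}{(tb+(1-t)a)^{2}}\,f'\!\left(\frac{ab}{tb+(1-t)a}\right)dt \;=\; -\,\frac{1}{ab(b-a)}\int_{0}^{1}(1-2t)\,g'(t)\,dt.$$

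Second, I would apply integration by parts to $\int_{0}^{1}(1-2t)\,g'(t)\,dt$, differentiating the polynomial factor and integrating the derivative:
$$ \int_{0}^{1}(1-2t)\,g'(t)\,dt \;=\; \bigl[(1-2t)\,g(t)\bigr]_{0}^{1} + 2\int_{0}^{1} g(t)\,dt \;=\; -g(1)-g(0) + 2\int_{0}^{1} g(t)\,dt.$$
Since $g(0)=f(b)$ and $g(1)=f(a)$, the boundary term contributes $-\bigl(f(a)+f(b)\bigr)$.

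Third, I would convert the remaining integral $\int_{0}^{1} g(t)\,dt$ into the average-value integral on $[a,b]$. Under the substitution $x = \frac{ab}{tb+(1-t)a}$ one has $dx = -\frac{ab(b-a)}{(tb+(1-t)a)^{2}}\,dt = -\frac{x^{2}(b-a)}{ab}\,dt$, together with $t=0\leftrightarrow x=b$ and $t=1\leftrightarrow x=a$, so
$$ \int_{0}^{1} f\!\left(\frac{ab}{tb+(1-t)a}\right)dt \;=\; \frac{ab}{b-a}\int_{a}^{b}\frac{f(x)}{x^{2}}\,dx.$$

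Finally, I would substitute these pieces back in, multiply through by $\frac{ab(b-a)}{2}$, and collect signs. This yields exactly
$$ \frac{ab(b-a)}{2}\int_{0}^{1}\frac{1-2t}{(tb+(1-t)a)^{2}}\,f'\!\left(\frac{ab}{tb+(1-t)a}\right)dt \;=\; \frac{f(a)+f(b)}{2} - \frac{ab}{b-a}\int_{a}^{b}\frac{f(x)}{x^{2}}\,dx,$$
which is the desired identity. The argument is essentially a one-line integration by parts once the chain-rule derivative of $g$ is spotted; the only place requiring care is the sign bookkeeping in the change of variables (limits of integration reverse, producing the extra minus that cancels the one from $g'$), and clarifying/correcting the apparent missing square on $(tb+(1-t)a)$ in the displayed statement.
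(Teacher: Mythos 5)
Your proof is correct: the paper itself states this lemma without proof (it is quoted from I\c{s}can's cited work), and the standard argument there is exactly your integration by parts on $\int_0^1(1-2t)g'(t)\,dt$ with $g(t)=f\bigl(\frac{ab}{tb+(1-t)a}\bigr)$ followed by the substitution $x=\frac{ab}{tb+(1-t)a}$. You are also right that the denominator in the displayed statement should read $(tb+(1-t)a)^{2}$ — a typo in the paper, as confirmed by the way the lemma is used in the proofs of the subsequent theorems.
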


\begin{theorem}\label{II2}
Let $f :{I \subset (0,\infty)}   \rightarrow \mathbb{R}$ be a differentiable function on $I^{\circ}$, $a,\frac{b}{m} \in I^{\circ}$ with $a < b$, $m \in (0,1]$ and $f' \in L[a,b]$. If $|f'|^{q}$ is harmonically $(s,m)$-convex in second sense on $[a,\frac{b}{m}]$ for $q \geq 1$ with $s \in [0,1]$, then

$$\vast| \frac{f(a) + f(b)}{2} - \frac{ab}{b - a} \int_{a}^{b} \frac{f(x)}{x^{2}}dx \vast| \leq \frac{ab(b - a)}{2^{2 - \frac{1}{q}}} \big [ \rho_{1} (s,q;a,b)|f'(a)|^{q} + m \rho_{2}(s,q;a,b)|f'(\frac{b}{m})|^{q} \big]^{\frac{1}{q}}   $$
where
 \begin{eqnarray*}
 \rho_{1} (s,q;a,b) &=& \frac{\beta(1, s + 2)}{b^{2q}}._{2}F_{1} \big( 2q,1;s + 3 ;1 - \frac{a}{b} \big) - \frac{\beta(2,s + 1)}{b^{2q}}._{2}F_{1} \big( 2q,2;s + 3; 1 - \frac{a}{b} \big)\\
 &+&\frac{2^{2q - s} \beta(2,s + 1)}{(a + b)^{2q}} ._{2}F_{1} \big( 2q,2;s + 3; 1 - \frac{2a}{a + b} \big)
 \end{eqnarray*}
 \begin{eqnarray*}
 \rho_{2}(s,q;a,b) &=& \frac{\beta(s + 1,2)}{2^{s}b^{2q}}._{2}F_{1} \big(2q,s + 1;s + 3,\frac{1}{2}(1 - \frac{a}{b}) \big) - \frac{\beta(s + 1,2)}{b^{2q}}._{2}F_{1} \big(2q,s + 1;s + 3,1 - \frac{a}{b} \big)\\
 &+& \frac{\beta(s + 2,1)}{b^{2q}}._{2}F_{1} \big(2q,s + 2;s + 3,1 - \frac{a}{b} \big)
 \end{eqnarray*}
 $\beta$ is Euler Beta function defined by

 $$ \beta(x,y) = \frac{\Gamma(x) \Gamma(y)}{\Gamma(x + y)} = \int_{0}^{1} t^{x - 1} (1 - t)^{y - 1} dt,\; x,y>0 $$
 and$ _{2}F_{1}$ is hypergeometric function defined by
 $$_{2}F_{1}(a,b;c,z) = \frac{1}{\beta(b,c - b)}\int_{0}^{1} t^{b - 1} (1 - t)^{c - b -1} (1 - zt)^{-a} dt,\; c>b>0,\;|z|<1  $$
\end{theorem}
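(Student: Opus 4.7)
The plan is to start from the integration-by-parts identity given in the preceding Lemma, namely
$$\frac{f(a)+f(b)}{2} - \frac{ab}{b-a}\int_{a}^{b}\frac{f(x)}{x^{2}}\,dx = \frac{ab(b-a)}{2}\int_{0}^{1}\frac{(1-2t)}{(tb+(1-t)a)^{2}}\,f'\!\left(\frac{ab}{tb+(1-t)a}\right)dt,$$
(reading the denominator as a square, since otherwise the scaling does not match). Taking absolute values inside the integral gives an integrand $|1-2t|\cdot G(t)$ with $G(t)=|f'(ab/(tb+(1-t)a))|/(tb+(1-t)a)^{2}$. Since $q\ge 1$, the next step is the weighted power-mean (Hölder) inequality
$$\int_{0}^{1}|1-2t|\,G(t)\,dt \le \left(\int_{0}^{1}|1-2t|\,dt\right)^{1-1/q}\left(\int_{0}^{1}|1-2t|\,G(t)^{q}\,dt\right)^{1/q}.$$
Because $\int_{0}^{1}|1-2t|\,dt=1/2$, combining this with the factor $ab(b-a)/2$ from the identity produces exactly the prefactor $ab(b-a)/2^{2-1/q}$ claimed in the theorem.

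The third step is to invoke harmonically $(s,m)$-convexity of $|f'|^{q}$ on $[a,b/m]$. Writing $ab/(tb+(1-t)a)$ as $m\,a\,(b/m)/(mt(b/m)+(1-t)a)$ and applying the definition with $x=a$, $y=b/m$, gives
$$\left|f'\!\left(\frac{ab}{tb+(1-t)a}\right)\right|^{q}\le t^{s}|f'(a)|^{q}+m(1-t)^{s}\left|f'\!\left(\tfrac{b}{m}\right)\right|^{q}.$$
Substituting this into the $q$-th power integral splits the bound into two pieces whose coefficients are precisely
$$\rho_{1}(s,q;a,b)=\int_{0}^{1}\frac{|1-2t|\,t^{s}}{(tb+(1-t)a)^{2q}}\,dt,\qquad \rho_{2}(s,q;a,b)=\int_{0}^{1}\frac{|1-2t|\,(1-t)^{s}}{(tb+(1-t)a)^{2q}}\,dt.$$

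The only real work left is to show that these two scalar integrals equal the $\beta$–${}_{2}F_{1}$ expressions in the statement, and this is where the main technical obstacle lies. The trick will be the identity
$$\int_{0}^{1}|1-2t|\,\varphi(t)\,dt = 2\int_{0}^{1/2}(1-2t)\,\varphi(t)\,dt - \int_{0}^{1}(1-2t)\,\varphi(t)\,dt.$$
For $\rho_{1}$, I will handle the $\int_{0}^{1}(1-2t)$ piece by the substitution $u=1-t$, which turns $(tb+(1-t)a)^{-2q}$ into $b^{-2q}(1-(1-a/b)u)^{-2q}$; splitting $(1-2u)=(1-u)-u$ and recognizing $\int_{0}^{1}u^{b-1}(1-u)^{c-b-1}(1-zu)^{-a}du=\beta(b,c-b)\,{}_{2}F_{1}(a,b;c;z)$ produces the first two terms with $c=s+3$ and $z=1-a/b$. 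For the remaining $\int_{0}^{1/2}(1-2t)\,t^{s}(tb+(1-t)a)^{-2q}dt$, I will rescale via $v=2t$, which converts the kernel into $(2a+v(b-a))^{-2q}$; a further substitution $w=1-v$ writes this as $(a+b)^{-2q}(1-(\tfrac{b-a}{a+b})w)^{-2q}$, and produces exactly the third summand of $\rho_{1}$ with argument $1-\tfrac{2a}{a+b}$ and the prefactor $2^{2q-s}\beta(2,s+1)/(a+b)^{2q}$. For $\rho_{2}$ the same strategy, except with roles of $t$ and $1-t$ swapped (now the $\int_{0}^{1/2}$ rescaling yields the $z/2=(1-a/b)/2$ argument and the $1/2^{s}$ factor), gives the three claimed terms. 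Assembling everything with the Hölder coefficient produces the stated inequality; the rest is bookkeeping with the hypergeometric integral representation.
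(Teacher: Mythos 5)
Your proposal is correct and follows essentially the same route as the paper: the Lemma (with the denominator read as $(tb+(1-t)a)^{2}$), the power-mean inequality with $\int_{0}^{1}|1-2t|\,dt=\tfrac12$, the harmonic $(s,m)$-convexity bound $|f'(ab/(tb+(1-t)a))|^{q}\le t^{s}|f'(a)|^{q}+m(1-t)^{s}|f'(b/m)|^{q}$, and the reduction of the two resulting integrals to Beta/${}_{2}F_{1}$ form by splitting at $t=\tfrac12$. Your substitution scheme does reproduce the stated $\rho_{1}$ and $\rho_{2}$ (the paper merely asserts these evaluations, and in fact its displayed computation of $\rho_{2}$ contains a typo whose second and third terms cancel, whereas your sketch yields the correct expression in the theorem statement).
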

\begin{proof}
From above Lemma and using power mean inequality, we have
\begin{eqnarray*}
\vast| \frac{f(a) + f(b)}{2} - \frac{ab}{b - a} \int_{a}^{b} \frac{f(x)}{x^{2}}dx \vast| &\leq& \frac{ab(b - a)}{2} \int_{0}^{1} \big| \frac{1 - 2t}{(tb + (1 - t)a)^{2}}\big| \big| f' \big( \frac{ab}{tb + (1 - t)a} \big) \big|dt\\
&\leq& \frac{ab(b - a)}{2} \vast( \int_{0}^{1} |1 - 2t|dt \vast)^{1 - \frac{1}{q}}\\
&\times& \vast(   \int_{0}^{1} \frac{|1 - 2t|}{(tb + (1 - t)a)^{2q}} \big| f' \big( \frac{ab}{tb + (1 - t)a} \big) \big|^{q}dt \vast)^{\frac{1}{q}}
\end{eqnarray*}
Since, $|f'|^{q}$ is harmonically $(s,m)$-convex function in second sense, we have
$$ \vast| \frac{f(a) + f(b)}{2} - \frac{ab}{b - a} \int_{a}^{b} \frac{f(x)}{x^{2}}dx \vast|  $$
$$\leq \frac{ab(b - a)}{2} \big( \frac{1}{2} \big)^{1 - \frac{1}{q}}\vast( \int_{0}^{1} \frac{|1 - 2t|[t^{s} |f'(a)|^{q} + m(1 - t)^{s}|f'(\frac{b}{m})|^{q}]}{(tb + (1 - t)a)^{2q}}dt \vast)^{\frac{1}{q}} $$

$$ = \frac{ab(b - a)}{2} \big( \frac{1}{2} \big)^{1 - \frac{1}{q}}\vast[ |f'(a)|^{q} \int_{0}^{1}\frac{|1 - 2t|t^{s}}{(tb + (1 - t)a)^{2q}}dt + m |f'(\frac{b}{m})|^{q}\int_{0}^{1}\frac{|1 - 2t|(1 - t)^{s}}{(tb + (1 - t)a)^{2q}}dt  \vast]^{\frac{1}{q}} $$
$$ = \frac{ab(b - a)}{2} \big( \frac{1}{2} \big)^{1 - \frac{1}{q}}\vast[\rho_{1}(s,q;a,b) |f'(a)|^{q}  + m \rho_{2}(s,q;a,b) |f'(\frac{b}{m})|^{q} \vast]^{\frac{1}{q}}   $$
It is easy to check that
\begin{eqnarray*}
\int_{0}^{1}\frac{|1 - 2t|t^{s}}{(tb + (1 - t)a)^{2q}}dt &=& \int_{0}^{\frac{1}{2}}\frac{|1 - 2t|t^{s}}{(tb + (1 - t)a)^{2q}}dt + \int_{\frac{1}{2}}^{1}\frac{|1 - 2t|t^{s}}{(tb + (1 - t)a)^{2q}}dt\\
& = &\frac{2^{2q - s} \beta(2, s+ 1)}{(a + b)^{2q}}._{2}F_{1} (2q,2;s + 3, 1 - \frac{2a}{a + b})
-\frac{\beta(2,s  + 1)}{b^{2q}}._{2}F_{1}(2q,2;s + 3,1 - \frac{a}{b})\\
&+& \frac{\beta(1,s + 2)}{b^{2q}}._{2}F_{1}(2q,1;s + 3,1 - \frac{a}{b})\\ &:=& \rho_{1} (s,q;a,b)
\end{eqnarray*}
and
\begin{eqnarray*}
\int_{0}^{1}\frac{|1 - 2t|(1 - t)^{s}}{(tb + (1 - t)a)^{2q}}dt
&=& \frac{\beta(s + 1,2)}{2^{s}b^{2q}}._{2}F_{1}(2q,s + 1;s + 3,\frac{1}{2}(1 - \frac{a}{b})) -\frac{\beta(s + 2,1)}{b^{2q}}._{2}F_{1}(2q,s + 2;s + 3,(1 - \frac{a}{b}))\\
&+&\frac{\beta(s + 2,1)}{b^{2q}}._{2}F_{1}(2q,s + 2;s + 3,(1 - \frac{a}{b})) := \rho_{2}(s,q;a,b)
\end{eqnarray*}
This completes the proof.
\end{proof}
If we take $s = m = 1$ in Theorem \ref{II2}, we get the following
\begin{corollary}
Let $f:I \subset (0,\infty) \rightarrow \mathbb{R}$ be differentiable function on $I\circ$, $a,b \in I\circ$ with $a < b$ and $f' \in L[a,b]$. If $|f'|^{q}$ is $(1,1)$-harmonically convex in second sense or harmonically convex function on $[a,b]$ for $q \geq 1$, then
$$ \vast| \frac{f(a) + f(b)}{2} - \frac{ab}{b - a} \int_{a}^{b} \frac{f(x)}{x^{2}}dx \vast| \leq \frac{ab(b - a)}{2^{2 - \frac{1}{q}}} \big [ \rho_{1} (1,q;a,b)|f'(a)|^{q} +  \rho_{2}(1,q;a,b)|f'(b)|^{q} \big]^{\frac{1}{q}}    $$
\end{corollary}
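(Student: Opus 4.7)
The plan is to derive this corollary as a direct specialization of Theorem \ref{II2}. The hypotheses stated in the corollary ($f$ differentiable on $I^{\circ}$, $a,b \in I^{\circ}$ with $a<b$, $f' \in L[a,b]$, and $|f'|^{q}$ harmonically convex on $[a,b]$ for $q \geq 1$) match exactly those of Theorem \ref{II2} once one sets $s=1$ and $m=1$: by the Remark following the definition of harmonic $(s,m)$-convexity in the second sense, harmonic $(1,1)$-convexity coincides with ordinary harmonic convexity, and the auxiliary interval $[a,b/m]$ collapses to $[a,b]$, matching the domain on which $|f'|^q$ is assumed convex in the corollary.

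Concretely, I would take the inequality from the conclusion of Theorem \ref{II2} and substitute $s=1$, $m=1$ throughout. The prefactor $m$ in front of the term $\rho_{2}(s,q;a,b)\,|f'(b/m)|^{q}$ becomes $1$, and the argument $b/m$ reduces to $b$, so this term becomes $\rho_{2}(1,q;a,b)\,|f'(b)|^{q}$. The remaining term becomes $\rho_{1}(1,q;a,b)\,|f'(a)|^{q}$ and the multiplicative constant $\frac{ab(b-a)}{2^{2-1/q}}$ is unchanged since neither $s$ nor $m$ appear in it. Because the corollary states the bound in terms of $\rho_{1}(1,q;a,b)$ and $\rho_{2}(1,q;a,b)$ directly---not in closed form---no evaluation or simplification of the Beta or hypergeometric quantities is required.

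The main (and essentially the only) point to verify is that the hypothesis ``$|f'|^{q}$ harmonically convex on $[a,b]$'' in the corollary is genuinely the same as ``$|f'|^{q}$ harmonically $(1,1)$-convex in the second sense on $[a,b/m]=[a,b]$'' needed to invoke Theorem \ref{II2}; this is immediate from the Remark. I therefore do not expect any real obstacle: the corollary follows by a mechanical substitution of parameters into the already-established theorem.
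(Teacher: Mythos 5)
Your proposal is correct and matches the paper exactly: the paper obtains this corollary simply by setting $s=m=1$ in Theorem \ref{II2}, which is precisely the mechanical substitution you describe. The only point of substance---that harmonic $(1,1)$-convexity in the second sense coincides with ordinary harmonic convexity and that $[a,b/m]$ becomes $[a,b]$---is handled the same way in both.
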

\begin{corollary}
If we take $m = 1$ in Theorem \ref{II2}, then we get
$$\vast| \frac{f(a) + f(b)}{2} - \frac{ab}{b - a} \int_{a}^{b} \frac{f(x)}{x^{2}}dx \vast| \leq \frac{ab(b - a)}{2^{2 - \frac{1}{q}}} \big [ \rho_{1} (s,q;a,b)|f'(a)|^{q} +  \rho_{2}(s,q;a,b)|f'(b)|^{q} \big]^{\frac{1}{q}}$$
\end{corollary}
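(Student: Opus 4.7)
The plan is to obtain this corollary directly from Theorem \ref{II2} by specialization, so no genuinely new estimate is required; the whole argument is a substitution $m=1$ together with a careful bookkeeping check.

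First I would verify that the hypotheses of Theorem \ref{II2} are compatible with $m=1$. Under $m=1$, the interval on which the convexity assumption is imposed, $[a,b/m]$, collapses to $[a,b]$, and the notion of harmonic $(s,1)$-convexity in the second sense coincides with harmonic $s$-convexity in the second sense in the sense of \cite{mnpj}, as recorded in the remark following the definition of harmonically $(s,m)$-convex functions. Thus every hypothesis of Theorem \ref{II2}---differentiability on $I^{\circ}$, the containment $a,b\in I^{\circ}$, integrability of $f'$ on $[a,b]$, and harmonic $(s,m)$-convexity of $|f'|^q$ on $[a,b/m]$---specializes cleanly to the hypothesis one wants for the corollary.

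Next I would simply read off the conclusion. With $m=1$ the auxiliary point $b/m$ becomes $b$, so $|f'(b/m)|^q=|f'(b)|^q$, and the scalar $m$ multiplying $\rho_2(s,q;a,b)|f'(b/m)|^q$ becomes $1$. The functions $\rho_1(s,q;a,b)$ and $\rho_2(s,q;a,b)$ defined in Theorem \ref{II2} do not depend on $m$, so their formulas are unchanged. Substituting into the inequality of Theorem \ref{II2} therefore yields
\[
\Bigl|\tfrac{f(a)+f(b)}{2}-\tfrac{ab}{b-a}\int_a^b \tfrac{f(x)}{x^2}\,dx\Bigr|
\le \tfrac{ab(b-a)}{2^{2-1/q}}\bigl[\rho_1(s,q;a,b)|f'(a)|^q+\rho_2(s,q;a,b)|f'(b)|^q\bigr]^{1/q},
\]
which is exactly the asserted inequality.

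There is essentially no obstacle here: the only points requiring attention are the two easily-overlooked conventions---that the domain $[a,b/m]$ for the convexity assumption degenerates correctly to $[a,b]$, and that the $m$ appearing explicitly in front of $\rho_2$ in Theorem \ref{II2} is the \emph{only} instance of $m$ on the right-hand side (in particular, $\rho_1$ and $\rho_2$ have no hidden $m$-dependence through the hypergeometric or Beta expressions). Once these are verified, the corollary is immediate from Theorem \ref{II2}.
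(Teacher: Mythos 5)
Your proof is correct and is exactly the argument the paper intends: the corollary is an immediate specialization of Theorem \ref{II2} at $m=1$, with $[a,b/m]$ collapsing to $[a,b]$ and $\rho_1$, $\rho_2$ unchanged since they carry no $m$-dependence. No further comment is needed.
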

\begin{corollary}
If we take $s = 1$ in Theorem \ref{II2}, we get
$$\vast| \frac{f(a) + f(b)}{2} - \frac{ab}{b - a} \int_{a}^{b} \frac{f(x)}{x^{2}}dx \vast| \leq \frac{ab(b - a)}{2^{2 - \frac{1}{q}}} \big [ \rho_{1} (1,q;a,b)|f'(a)|^{q} + m \rho_{2}(1,q;a,b)|f'(\frac{b}{m})|^{q} \big]^{\frac{1}{q}}   $$

\end{corollary}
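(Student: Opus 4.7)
The plan is to deduce the corollary as an immediate specialization of Theorem \ref{II2}, taking $s=1$ throughout. First I would verify that the hypotheses of Theorem \ref{II2} are preserved under this specialization. The hypothesis that $|f'|^q$ be harmonically $(s,m)$-convex in second sense on $[a,b/m]$ becomes harmonically $(1,m)$-convex in second sense; by the remark following the definition of harmonically $(s,m)$-convexity, this is exactly the notion of harmonic $m$-convexity. All remaining assumptions (differentiability on $I^{\circ}$, integrability of $f'$ on $[a,b]$, the inclusion $a,b/m \in I^{\circ}$, and $q\geq 1$) are identical. Thus Theorem \ref{II2} applies verbatim.

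Next I would read off the conclusion. Substituting $s=1$ into the bound produced by Theorem \ref{II2} gives
\[
\left|\frac{f(a)+f(b)}{2}-\frac{ab}{b-a}\int_a^b\frac{f(x)}{x^2}\,dx\right|
\leq \frac{ab(b-a)}{2^{2-1/q}}\bigl[\rho_1(1,q;a,b)|f'(a)|^q + m\rho_2(1,q;a,b)|f'(b/m)|^q\bigr]^{1/q},
\]
which is precisely the stated inequality. No further manipulation is needed since the corollary retains the symbols $\rho_1(1,q;a,b)$ and $\rho_2(1,q;a,b)$ rather than demanding their closed-form evaluation.

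There is essentially no obstacle here; the only point worth flagging is the consistency of the domain condition. In Theorem \ref{II2} the convexity of $|f'|^q$ is assumed on $[a,b/m]$, and when $s=1$ this reduces to harmonic $m$-convexity on that same interval, which is meaningful since $m\in(0,1]$ gives $b/m\geq b>a$. One could additionally record, for the reader's convenience, that the Beta factors appearing in $\rho_1(1,q;a,b)$ and $\rho_2(1,q;a,b)$ specialise to values such as $\beta(1,3)=1/3$, $\beta(2,2)=1/6$, $\beta(3,1)=1/3$, with the corresponding ${}_2F_1$ parameters shifted accordingly, but this is optional cosmetic simplification and not required for the proof.
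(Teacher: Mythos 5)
Your proposal is correct and takes exactly the route the paper intends: the corollary is an immediate specialization of Theorem \ref{II2} at $s=1$, with the hypothesis reducing to harmonic $m$-convexity of $|f'|^{q}$ and the bound read off verbatim. No further argument is needed, and your optional remarks on the Beta values are harmless extras.
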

\begin{theorem}\label{II3}
Let $f :{I \subset (0,\infty)}   \rightarrow \mathbb{R}$ be a differentiable function on $I$, $ma,b \in I^{\circ}$ with $a < b$, $m \in (0,1]$ and $f' \in L[a,b]$. If $|f'|^{q}$ is harmonically $(s,m)$-convex in second sense on $[a,\frac{b}{m}]$ for $q \geq 1$ with $s \in [0,1]$, then

$$\vast| \frac{f(a) + f(b)}{2} - \frac{ab}{b - a} \int_{a}^{b} \frac{f(x)}{x^{2}}dx \vast|$$
$$\leq \frac{ab(b - a)}{2}\rho_{1}^{1 - \frac{1}{q}}(0,q;a,b) \big [ \rho_{1} (s,q;a,b)|f'(a)|^{q} + m \rho_{2}(s,q;a,b)|f'(\frac{b}{m})|^{q} \big]^{\frac{1}{q}}   $$
\end{theorem}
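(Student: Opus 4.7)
The plan is to follow the template of the proof of Theorem \ref{II2}, starting from the integral identity in the preceding Lemma, but to re-partition the integrand so that the leading factor produced by the power-mean step is $\rho_{1}^{1-1/q}(0,q;a,b)$ rather than $(1/2)^{1-1/q}$. First, I would take absolute values inside the integral on the right-hand side of the Lemma to obtain
$$\left|\frac{f(a)+f(b)}{2}-\frac{ab}{b-a}\int_a^b\frac{f(x)}{x^2}\,dx\right|\le\frac{ab(b-a)}{2}\int_0^1\frac{|1-2t|}{(tb+(1-t)a)^2}\left|f'\!\left(\frac{ab}{tb+(1-t)a}\right)\right|dt.$$

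Second, I would apply the power-mean inequality to the remaining integral using the weight $w(t)=\frac{|1-2t|}{(tb+(1-t)a)^{2q}}$, whose total mass $\int_0^1 w(t)\,dt=\rho_{1}(0,q;a,b)$ is exactly the quantity desired as the leading factor. Concretely, one writes the integrand in a factored form in which one piece, raised to the conjugate exponent $p=q/(q-1)$, reproduces $w(t)$ (so that its $L^{p}$-norm contributes $\rho_{1}^{1-1/q}(0,q;a,b)$), while the other piece, raised to the $q$-th power, yields an expression to which the harmonic $(s,m)$-convexity of $|f'|^{q}$ can be applied cleanly.

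Third, I would invoke the harmonic $(s,m)$-convexity of $|f'|^{q}$ on $[a,b/m]$, applied to the representation
$$\frac{ab}{tb+(1-t)a}=\frac{m\cdot a\cdot(b/m)}{mt\,(b/m)+(1-t)a},$$
which gives the pointwise bound
$$\left|f'\!\left(\frac{ab}{tb+(1-t)a}\right)\right|^{q}\le t^{s}|f'(a)|^{q}+m(1-t)^{s}\left|f'(b/m)\right|^{q}.$$
Substituting this estimate into the $L^{q}$-factor and distributing the integral, the weighted integrals of $t^{s}$ and $(1-t)^{s}$ against $\frac{|1-2t|}{(tb+(1-t)a)^{2q}}$ are precisely $\rho_{1}(s,q;a,b)$ and $\rho_{2}(s,q;a,b)$, which are the same integrals already computed inside the proof of Theorem \ref{II2}; reassembling yields the claimed estimate.

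The main obstacle I expect is the bookkeeping in the second step: one must choose the factorisation of $\frac{|1-2t|}{(tb+(1-t)a)^{2}}$ so that both Hölder exponents land on the intended $\rho$-integrals with second argument $q$ (rather than with second argument $1$, which is what a naive power-mean with the simpler weight $|1-2t|$ would produce, and which is exactly the route taken in Theorem \ref{II2}). Once this splitting is fixed and the convexity bound is inserted, the remainder of the argument is simply a rewriting of the resulting integrals as $\rho_{1}(0,q;a,b),\rho_{1}(s,q;a,b)$ and $\rho_{2}(s,q;a,b)$ in the notation already introduced.
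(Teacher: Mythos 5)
Your second step is where the argument breaks, and the obstacle you flag as ``bookkeeping'' is in fact an impossibility. Starting from $\int_0^1 \frac{|1-2t|}{(tb+(1-t)a)^{2}}\,\big|f'\big(\tfrac{ab}{tb+(1-t)a}\big)\big|\,dt$ and splitting the integrand as $A(t)B(t)$ with H\"older exponents $p=\frac{q}{q-1}$ and $q$, the powers of the kernel $tb+(1-t)a$ carried by $A$ and $B$ must add up to $2$; equivalently, if $A(t)^{p}$ carries the kernel to the power $\alpha$ and $B(t)^{q}$ to the power $\beta$, then $\frac{\alpha}{p}+\frac{\beta}{q}=2$. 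You ask for $\alpha=2q$ (so that the $L^{p}$-factor is $\rho_{1}^{1-1/q}(0,q;a,b)$) and simultaneously for $\beta=2q$ (so that, after inserting the convexity bound, the remaining integrals are $\rho_{1}(s,q;a,b)$ and $\rho_{2}(s,q;a,b)$). That forces $\frac{2q}{p}+\frac{2q}{q}=2(q-1)+2=2q=2$, i.e.\ $q=1$. For $q>1$, fixing $A(t)^{p}=\frac{|1-2t|}{(tb+(1-t)a)^{2q}}$ leaves $B(t)^{q}=\frac{|1-2t|}{(tb+(1-t)a)^{4q-2q^{2}}}\,|f'(\cdot)|^{q}$, whose weighted integrals of $t^{s}$ and $(1-t)^{s}$ are not $\rho_{1}(s,q;a,b)$, $\rho_{2}(s,q;a,b)$. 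So no H\"older/power-mean split puts the $2q$-power kernel into both factors, and your proposed derivation cannot be completed.

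The paper's proof takes the only workable route: it applies the power-mean inequality with the entire weight $w(t)=\frac{|1-2t|}{(tb+(1-t)a)^{2}}$, obtaining $\big(\int_0^1 w\,dt\big)^{1-1/q}\big(\int_0^1 w(t)\,|f'(\cdot)|^{q}dt\big)^{1/q}$, and then inserts the $(s,m)$-convexity bound (your step three, which is fine). The integrals that actually appear therefore carry the kernel $(tb+(1-t)a)^{-2}$, i.e.\ they are the Chen--Wu quantities of Theorem \ref{FS1} (hypergeometric first parameter $2$, not $2q$); the paper's labelling of them as $\rho_{1}(0,q;a,b)$, $\rho_{1}(s,q;a,b)$, $\rho_{2}(s,q;a,b)$ conflicts with the definitions given in Theorem \ref{II2} and is correct only when $q=1$. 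Read literally with those definitions, the displayed inequality is actually false: for $f(x)=x$, $s=m=1$, $q=2$, $a=10$, $b=11$, the left-hand side is about $0.016$, while the right-hand side equals $55\int_{0}^{1}\frac{|1-2t|}{(10+t)^{4}}dt<0.003$. So the statement has to be read with exponent-$2$ kernels, and then it follows by the straightforward weighted power-mean argument just described, not by the split you propose. In short: your steps one and three are sound, but step two fails, and the target as printed (with $2q$-power kernels in both factors) is unattainable because it is not true.
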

\begin{proof}
From Lemma, Power mean inequality and harmonically $(s,m)$-convexity in second sense of$ |f'|^{q}$ on $[a,\frac{b}{m}]$,we have
$$\vast| \frac{f(a) + f(b)}{2} - \frac{ab}{b - a} \int_{a}^{b} \frac{f(x)}{x^{2}}dx \vast| $$
$$\leq \frac{ab(b - a)}{2} \int_{0}^{1} \big| \frac{1 - 2t}{(tb + (1 - t)a)^{2}}\big| \big| f' \big( \frac{ab}{tb + (1 - t)a} \big) \big|dt  $$
$$\leq \frac{ab(b - a)}{2} \vast( \int_{0}^{1} |\frac{1 - 2t}{(tb + (1 - t)a)^{2}}|dt \vast)^{1 - \frac{1}{q}}
\vast(   \int_{0}^{1} \frac{|1 - 2t|}{(tb + (1 - t)a)^{2}} \big| f' \big( \frac{ab}{tb + (1 - t)a} \big) \big|^{q}dt \vast)^{\frac{1}{q}}   $$
$$\leq \frac{ab(b - a)}{2} \vast( \int_{0}^{1} |\frac{1 - 2t}{(tb + (1 - t)a)^{2}}|dt \vast)^{1 - \frac{1}{q}}\vast( \int_{0}^{1} \frac{|1 - 2t|[t^{s} |f'(a)|^{q} + m(1 - t)^{s}|f'(\frac{b}{m})|^{q}]}{(tb + (1 - t)a)^{2}}dt \vast)^{\frac{1}{q}}    $$
$$\leq \frac{ab(b - a)}{2}\rho_{1}^{1 - \frac{1}{q}}(0,q;a,b) \big [ \rho_{1} (s,q;a,b)|f'(a)|^{q} + m \rho_{2}(s,q;a,b)|f'(\frac{b}{m})|^{q} \big]^{\frac{1}{q}}   $$
\end{proof}
\begin{corollary}
If we take $m = 1$ in Theorem \ref{II3}, then we get
$$\vast| \frac{f(a) + f(b)}{2} - \frac{ab}{b - a} \int_{a}^{b} \frac{f(x)}{x^{2}}dx \vast|$$
$$\leq \frac{ab(b - a)}{2}\rho_{1}^{1 - \frac{1}{q}}(0,q;a,b) \big [ \rho_{1} (s,q;a,b)|f'(a)|^{q} +  \rho_{2}(s,q;a,b)|f'(b)|^{q} \big]^{\frac{1}{q}}   $$
This is Theorem \ref{FS1} proved by Feixiang Chen and Shanhe Wu in \cite{FU}.
\end{corollary}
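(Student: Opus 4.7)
The plan is to obtain this corollary as a direct specialization of Theorem \ref{II3} by setting $m = 1$. First I would observe that harmonically $(s,m)$-convexity in the second sense reduces to harmonically $s$-convexity in the second sense when $m = 1$, by the Remark following the definition. So the hypothesis that $|f'|^q$ is harmonically $s$-convex in the second sense on $[a,b]$ is exactly the hypothesis of Theorem \ref{II3} specialized to $m = 1$, with the interval $[a,b/m]$ becoming $[a,b]$.

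Next, I would substitute $m = 1$ directly into the conclusion of Theorem \ref{II3}. The factor $m$ in front of $\rho_{2}(s,q;a,b)$ becomes $1$, and the evaluation point $\frac{b}{m}$ simplifies to $b$, so $|f'(\frac{b}{m})|^{q} = |f'(b)|^{q}$. The prefactor $\frac{ab(b-a)}{2} \rho_{1}^{1 - \frac{1}{q}}(0,q;a,b)$ is unchanged since it does not depend on $m$, and neither do the coefficients $\rho_{1}(s,q;a,b)$ and $\rho_{2}(s,q;a,b)$ themselves (inspection of their defining hypergeometric expressions shows no $m$-dependence). Thus the stated inequality follows.

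Finally, I would verify the identification with the Chen--Wu result \cite{FU} (Theorem \ref{FS1}) by matching the coefficients: their $C_{1}(a,b)$ plays the role of $\rho_{1}(0,q;a,b)$ at $q = 1$ (it encodes the weight $|1-2t|/(tb+(1-t)a)^{2}$), while $C_{2}(s;a,b)$ and $C_{3}(s;a,b)$ correspond to $\rho_{1}(s,q;a,b)$ and $\rho_{2}(s,q;a,b)$ respectively after the same substitutions used in the computation of the two integrals inside the proof of Theorem \ref{II2}. There are no obstacles beyond this bookkeeping: the corollary is immediate from Theorem \ref{II3}, and the only thing to check is that the $m$-free form of the bound coincides term-by-term with the earlier Chen--Wu bound, which it does by construction.
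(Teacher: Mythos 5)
Your proposal is correct and matches the paper's (implicit) argument exactly: the paper offers no separate proof for this corollary, treating it as the immediate substitution $m=1$ in Theorem \ref{II3}, which is precisely what you do. Your closing identification with the Chen--Wu coefficients is the right bookkeeping in spirit, though strictly the match requires reading the second parameter $2q$ in the $\rho_i$ hypergeometric factors as $2$ (i.e.\ $q=1$ in that slot) to line up with $C_2$ and $C_3$ --- an inconsistency already present in the paper's own statement of Theorem \ref{II3}, not a defect of your argument.
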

\begin{corollary}
If we take $s = 1$ in Theorem \ref{II3}, we get
$$\vast| \frac{f(a) + f(b)}{2} - \frac{ab}{b - a} \int_{a}^{b} \frac{f(x)}{x^{2}}dx \vast|$$
$$\leq \frac{ab(b - a)}{2}\rho_{1}^{1 - \frac{1}{q}}(0,q;a,b) \big [ \rho_{1} (1,q;a,b)|f'(a)|^{q} + m \rho_{2}(1,q;a,b)|f'(\frac{b}{m})|^{q} \big]^{\frac{1}{q}}   $$
\end{corollary}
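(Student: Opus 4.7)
The statement is stated as a direct corollary of Theorem \ref{II3}, obtained by setting $s=1$, so the plan is simply to specialize rather than to re-run the proof from scratch. The LHS of Theorem \ref{II3} does not involve $s$, and the outer prefactor $\frac{ab(b-a)}{2}\rho_{1}^{1-\frac{1}{q}}(0,q;a,b)$ is likewise $s$-independent (note that the first argument of $\rho_1$ is fixed at $0$ here, so no substitution happens there). Therefore the entire task reduces to substituting $s=1$ into the two quantities $\rho_{1}(s,q;a,b)$ and $\rho_{2}(s,q;a,b)$ appearing inside the bracket, and reading off the resulting bound.

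Concretely, I would first quote Theorem \ref{II3} with its hypotheses (differentiability of $f$ on $I^{\circ}$, $a,b\in I^\circ$ with $a<b$, $m\in(0,1]$, $f'\in L[a,b]$, and harmonic $(s,m)$-convexity in the second sense of $|f'|^q$ on $[a,b/m]$), observe that setting $s=1$ is admissible (since $1\in[0,1]$), and then invoke the theorem with $s=1$. The harmonic $(1,m)$-convexity of $|f'|^q$ on $[a,b/m]$ is precisely the hypothesis in the corollary, so no extra assumption is required. The conclusion then reads exactly as stated, with $\rho_1(1,q;a,b)$ and $\rho_2(1,q;a,b)$ replacing $\rho_1(s,q;a,b)$ and $\rho_2(s,q;a,b)$.

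There is no genuine mathematical obstacle; the only thing to be careful about is that the factor $\rho_{1}^{1-1/q}(0,q;a,b)$ in front of the bracket must not be relabeled to $\rho_{1}^{1-1/q}(1,q;a,b)$. This factor arises in the proof of Theorem \ref{II3} from the integral $\int_0^1 |1-2t|/(tb+(1-t)a)^2\,dt$, which is $\rho_1$ evaluated at $s=0$ and is independent of the $s$ governing the convexity of $|f'|^q$. Once this distinction is respected, the substitution is immediate and the corollary follows in one line from Theorem \ref{II3}.
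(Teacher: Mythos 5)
Your proposal is correct and matches the paper's (implicit) argument exactly: the corollary is obtained by direct substitution of $s=1$ into Theorem \ref{II3}, and you rightly note that the prefactor $\rho_{1}^{1-\frac{1}{q}}(0,q;a,b)$ is independent of the convexity parameter and must be left untouched. Nothing further is needed.
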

\begin{corollary}
If we take $s = m = 1$ in Theorem \ref{II3}, we get
$$\vast| \frac{f(a) + f(b)}{2} - \frac{ab}{b - a} \int_{a}^{b} \frac{f(x)}{x^{2}}dx \vast|$$
$$\leq \frac{ab(b - a)}{2}\rho_{1}^{1 - \frac{1}{q}}(0,q;a,b) \big [ \rho_{1} (1,q;a,b)|f'(a)|^{q} +  \rho_{2}(1,q;a,b)|f'(b)|^{q} \big]^{\frac{1}{q}}   $$
which is Theorem \ref{I1} proved by Imdat Iscan in \cite{Ak}.
\end{corollary}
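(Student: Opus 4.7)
The plan is to substitute $s = 1$ and $m = 1$ directly into Theorem~\ref{II3} and then to match the three resulting constants with $\lambda_1,\lambda_2,\lambda_3$ of Theorem~\ref{I1}. With $m = 1$ the interval $[a,b/m]$ collapses to $[a,b]$; the Remark following the definition of harmonic $(s,m)$-convexity shows that $(1,1)$-harmonic convexity is just ordinary harmonic convexity, so the hypothesis of Theorem~\ref{II3} becomes the hypothesis of Theorem~\ref{I1}. The term $m|f'(b/m)|^q$ in the conclusion reduces to $|f'(b)|^q$, and the outer and inner weights become $\rho_1^{1-1/q}(0,q;a,b)$ and $\rho_1(1,q;a,b),\rho_2(1,q;a,b)$ respectively. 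This step is purely mechanical and delivers the displayed inequality at once.

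It remains to verify $\rho_1(0,q;a,b)=\lambda_1$, $\rho_1(1,q;a,b)=\lambda_2$, and $\rho_2(1,q;a,b)=\lambda_3$. Rather than manipulating the hypergeometric closed forms in Theorem~\ref{II2}, I would recover these constants from the integrals that produce them inside the power-mean step of Theorem~\ref{II3}, namely
\[
\int_0^1 \frac{|1-2t|}{(tb+(1-t)a)^2}\,dt, \qquad \int_0^1 \frac{|1-2t|\,t}{(tb+(1-t)a)^2}\,dt, \qquad \int_0^1 \frac{|1-2t|\,(1-t)}{(tb+(1-t)a)^2}\,dt.
\]
Each is elementary: split at $t = 1/2$ to resolve $|1-2t|$ and substitute $u = tb+(1-t)a$, reducing the problem to antiderivatives of $1/u^2$ and $u/u^2$ on $[a,(a+b)/2]$ and $[(a+b)/2,b]$. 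The common boundary $u=(a+b)/2$ produces the $\ln\!\big((a+b)^2/(4ab)\big)$ terms, and short arithmetic then matches Iscan's closed forms for $\lambda_1,\lambda_2,\lambda_3$ exactly.

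The main place requiring care, more a matter of careful reading than of mathematics, is the notational gap between the definitions of $\rho_1,\rho_2$ in Theorem~\ref{II2} (which carry exponent $2q$ on $tb+(1-t)a$) and their usage inside Theorem~\ref{II3} (where the outer factor of the power-mean split in fact carries exponent $2$). These are the same quantity in the end because the exponent $2q$ is only what appears after $|f'(\,\cdot\,)|^q$ has been bounded via the harmonic convexity hypothesis; once this bookkeeping is checked, the three identifications above are immediate and the bound reduces verbatim to Theorem~\ref{I1}, completing the proof.
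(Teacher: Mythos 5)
Your substitution step is exactly what the paper does -- the paper offers no argument for this corollary beyond ``set $s=m=1$'' -- but your proposal goes further in a way that is both correct and genuinely needed. The assertion that the resulting bound ``is Theorem \ref{I1}'' is not self-evident from the stated formulas: taken literally, the definitions of $\rho_{1},\rho_{2}$ in Theorem \ref{II2} carry the factor $b^{-2q}$ and the parameter $2q$ inside the hypergeometric functions, and with that reading $\rho_{1}(0,q;a,b)$ would \emph{not} equal $\lambda_{1}$. You correctly identify that in the proof of Theorem \ref{II3} the power-mean split leaves the exponent $2$ (not $2q$) on $tb+(1-t)a$ in every integral, so the constants actually produced there are $\int_{0}^{1}|1-2t|\,t^{s}(tb+(1-t)a)^{-2}\,dt$ and its companion -- i.e.\ the $q=1$ instances of the $\rho$'s -- and your plan of evaluating these three elementary integrals by splitting at $t=\tfrac12$ and substituting $u=tb+(1-t)a$ does recover $\lambda_{1},\lambda_{2},\lambda_{3}$ exactly, which is how I\c{s}can computes them. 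Your one loose phrase is that the $2q$ and $2$ versions ``are the same quantity in the end''; they are not the same quantity -- the point is rather that Theorem \ref{II3}'s notation $\rho_{i}(s,q;a,b)$ is an abuse and should be read as $\rho_{i}(s,1;a,b)$. With that reading fixed, your argument is complete and supplies the verification the paper omits.
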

\begin{theorem}\label{II4}
Let $f :{I \subset (0,\infty)}   \rightarrow \mathbb{R}$ be a differentiable function on $I^{\circ}$, $ma,b \in I^{\circ}$ with $a < b$, $m \in (0,1]$ and $f' \in L[a,b]$. If $|f'|^{q}$ is harmonically $(s,m)$-convex in second sense on $[a,\frac{b}{m}]$ for $q > 1$, $ \frac{1}{p} + \frac{1}{q} = 1$ with $s \in [0,1]$, then
$$\vast| \frac{f(a) + f(b)}{2} - \frac{ab}{b - a} \int_{a}^{b} \frac{f(x)}{x^{2}}dx \vast|$$
$$\leq \frac{ab(b - a)}{2} \big( \frac{1}{p + 1} \big)^{\frac{1}{p}} \big[ \nu_{1}(s,q;a,b)|f'(a)|^{q} + m \nu_{2}(s,q;a,b)|f'(\frac{b}{m})|^{q} \big]^{\frac{1}{q}}$$

where

$$ \nu_{1}(s,q;a,b) = \frac{\beta(1,s + 1)}{b^{2q}}._{2}F_{1}(2q,1;s + 2,1 - \frac{a}{b})$$
and
$$   \nu_{2}(s,q;a,b) = \frac{\beta(s + 1,1)}{b^{2q}}._{2}F_{1}(2q,s + 1;s + 2,1 - \frac{a}{b})  $$
\end{theorem}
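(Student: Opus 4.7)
The plan is to mirror the proof of Theorem \ref{II2}, but to replace the power mean inequality by H\"older's inequality, since the hypothesis $q>1$ with conjugate exponent $p$ is the natural setting for H\"older. Starting from the preceding Lemma, we have the identity
$$ \frac{f(a)+f(b)}{2} - \frac{ab}{b-a}\int_a^b \frac{f(x)}{x^2}\,dx = \frac{ab(b-a)}{2}\int_0^1 \frac{1-2t}{(tb+(1-t)a)^2} f'\!\left(\frac{ab}{tb+(1-t)a}\right)dt, $$
so taking absolute values and then applying H\"older's inequality with the split
$$ |1-2t|\cdot \frac{1}{(tb+(1-t)a)^2}\left|f'\!\left(\tfrac{ab}{tb+(1-t)a}\right)\right| = |1-2t|\cdot\left[\frac{|f'(\cdot)|^q}{(tb+(1-t)a)^{2q}}\right]^{1/q}, $$
produces the factor $\bigl(\int_0^1 |1-2t|^p\,dt\bigr)^{1/p} = (p+1)^{-1/p}$, which already accounts for the leading constant on the right-hand side of the desired inequality.

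Next I would invoke the harmonically $(s,m)$-convexity of $|f'|^q$ on $[a,b/m]$ in exactly the same form used in Theorem \ref{II2}, namely
$$ \left|f'\!\left(\frac{ab}{tb+(1-t)a}\right)\right|^q \le t^s |f'(a)|^q + m(1-t)^s \left|f'(b/m)\right|^q, $$
and split the $q$-integral into two pieces. At this point what remains is to identify
$$ \int_0^1 \frac{t^s}{(tb+(1-t)a)^{2q}}\,dt = \nu_1(s,q;a,b), \qquad \int_0^1 \frac{(1-t)^s}{(tb+(1-t)a)^{2q}}\,dt = m^{-1}\cdot m\nu_2(s,q;a,b). $$

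The main (and essentially only) technical step is the evaluation of these two integrals as hypergeometric functions. The standard trick is to factor $tb+(1-t)a = b\bigl[1-(1-a/b)(1-t)\bigr]$ and substitute $u=1-t$; this converts the first integral into $b^{-2q}\int_0^1 (1-u)^s (1-zu)^{-2q}\,du$ with $z=1-a/b$, which matches the Euler integral representation of $_2F_1$ stated in the theorem with parameters $b=1,\, c=s+2,\, a=2q$ and gives $\beta(1,s+1)\cdot {}_2F_1(2q,1;s+2,1-a/b)/b^{2q}$. The same substitution on the second integral matches $_2F_1$ with parameters $b=s+1,\, c=s+2,\, a=2q$ and yields $\beta(s+1,1)\cdot {}_2F_1(2q,s+1;s+2,1-a/b)/b^{2q}$. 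Combining everything then reproduces the claimed bound, and the hardest part is really only making sure the parameter identification in the hypergeometric representation is executed correctly; no delicate splitting at $t=1/2$ is needed here (in contrast to Theorem \ref{II2}), because the $|1-2t|$ factor has been absorbed into the H\"older constant from the very first step.
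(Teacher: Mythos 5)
Your proposal is correct and follows essentially the same route as the paper: the Lemma, then H\"older's inequality putting $|1-2t|$ in $L^p$ (giving the factor $(p+1)^{-1/p}$) and the weighted $|f'|^q$ term in $L^q$, then the harmonic $(s,m)$-convexity bound, and finally the evaluation of $\int_0^1 t^s(tb+(1-t)a)^{-2q}dt$ and $\int_0^1 (1-t)^s(tb+(1-t)a)^{-2q}dt$ via the Euler integral representation of $_2F_1$ after writing $tb+(1-t)a=b[1-(1-\frac{a}{b})(1-t)]$, exactly the identifications yielding $\nu_1$ and $\nu_2$. The paper leaves these integral evaluations as ``an easy calculation,'' so your parameter bookkeeping simply fills in the same computation.
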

\begin{proof}
From Lemma, H\"{o}lder's inequality and harmonically $(s,m)$-convexity of $|f'|^{q}$ on $[a,\frac{b}{m}]$, we have
$$\vast| \frac{f(a) + f(b)}{2} - \frac{ab}{b - a} \int_{a}^{b} \frac{f(x)}{x^{2}}dx \vast| $$
$$\leq \frac{ab(b - a)}{2}\vast( \int_{0}^{1}|1 - 2t|^{p}dt\vast)^{\frac{1}{p}} \vast( \int_{0}^{1} \frac{1}{(tb + (1 - t)a)^{2q}}\big| f' \big( \frac{ab}{tb + (1 - t)a} \big) \big|dt  $$
$$\leq \frac{ab(b - a)}{2} \vast(\frac{1}{p + 1} \vast)^{\frac{1}{p}} \vast[ |f'(a)|^{q} \int_{0}^{1} \frac{t^{s}}{(tb + (1 - t)a)^{2q}}dt + m|f'(\frac{b}{m})|^{q} \int_{0}^{1} \frac{ (1 - t)^{s}}{(tb + (1 - t)a)^{2q}}dt \vast]^{\frac{1}{q}} $$
$$= \frac{ab(b - a)}{2} \vast(\frac{1}{p + 1} \vast)^{\frac{1}{p}} \vast[ |f'(a)|^{q} \nu_{1}(s,q;a,b) + m|f'(\frac{b}{m})|^{q} \nu_{2}(s,q;a,b) \vast]^{\frac{1}{q}} $$
where an easy calculation gives
\begin{eqnarray*}
\int_{0}^{1} |1 - 2t|^{p} dt
&=&\frac{1}{p + 1}
\end{eqnarray*}
\begin{eqnarray*}
\int_{0}^{1} \frac{t^{s}}{(tb + (1 - t)a)^{2q}}dt
&=& \frac{\beta(1,s + 1)}{b^{2q}}._{2}F_{1}(2q,1;s + 2,1 - \frac{a}{b}) := \nu_{1}(s,q;a,b)
\end{eqnarray*}
and
\begin{eqnarray*}
\int_{0}^{1} \frac{(1 - t)^{s}}{(tb + (1 - t)a)^{2q}}dt
&=& \frac{\beta(s + 1,1)}{b^{2q}}._{2}F_{1}(2q,s = 1;s + 2,1 - \frac{a}{b}) := \nu_{2}(s,q;a,b)
\end{eqnarray*}
This completes the proof.
\end{proof}
\begin{corollary}
If we take $ m = 1$ in Theorem \ref{II4}, then we get
$$\vast| \frac{f(a) + f(b)}{2} - \frac{ab}{b - a} \int_{a}^{b} \frac{f(x)}{x^{2}}dx \vast|$$
$$\leq \frac{ab(b - a)}{2} \big( \frac{1}{p + 1} \big)^{\frac{1}{p}} \big[ \nu_{1}(s,q;a,b)|f'(a)|^{q} +  \nu_{2}(s,q;a,b)|f'(b)|^{q} \big]^{\frac{1}{q}}$$
this is Theorem \ref{FS2} proved by Feixiang Chen and Shanhe Wu in \cite{FU}.
\end{corollary}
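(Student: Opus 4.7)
The plan is to obtain this corollary as an immediate specialization of Theorem \ref{II4}, followed by a brief cosmetic rewrite to match the formulation of Theorem \ref{FS2}. First I would note that the hypotheses of Theorem \ref{II4} are satisfied in the stated case: with $m=1$, the assumption that $|f'|^{q}$ is harmonically $(s,m)$-convex in second sense on $[a,b/m]$ reduces (by the remark following the definition of harmonically $(s,m)$-convexity) to the assumption that $|f'|^{q}$ is harmonically $s$-convex in second sense on $[a,b]$, which is what Theorem \ref{FS2} assumes. The remaining conditions ($q>1$, $1/p+1/q=1$, $s\in(0,1]$, $f$ differentiable with $f'\in L[a,b]$) transfer unchanged.

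Next I would simply substitute $m=1$ into the conclusion of Theorem \ref{II4}. The coefficient $\tfrac{ab(b-a)}{2}\bigl(\tfrac{1}{p+1}\bigr)^{1/p}$ and the functions $\nu_{1}(s,q;a,b)$, $\nu_{2}(s,q;a,b)$ are independent of $m$, so they pass through unchanged. The only $m$-dependent term on the right-hand side is $m\,\nu_{2}(s,q;a,b)\,|f'(b/m)|^{q}$, which collapses to $\nu_{2}(s,q;a,b)\,|f'(b)|^{q}$ when $m=1$. This yields directly
\begin{equation*}
\left| \frac{f(a)+f(b)}{2} - \frac{ab}{b-a}\int_{a}^{b}\frac{f(x)}{x^{2}}\,dx\right|
\leq \frac{ab(b-a)}{2}\left(\frac{1}{p+1}\right)^{\frac{1}{p}}\bigl[\nu_{1}(s,q;a,b)|f'(a)|^{q}+\nu_{2}(s,q;a,b)|f'(b)|^{q}\bigr]^{\frac{1}{q}},
\end{equation*}
which is the first display of the corollary.

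Finally, to justify the concluding sentence that the inequality coincides with Theorem \ref{FS2}, I would substitute the definitions $\nu_{1}(s,q;a,b)=\tfrac{\beta(1,s+1)}{b^{2q}}\,{}_{2}F_{1}(2q,1;s+2,1-\tfrac{a}{b})$ and $\nu_{2}(s,q;a,b)=\tfrac{\beta(s+1,1)}{b^{2q}}\,{}_{2}F_{1}(2q,s+1;s+2,1-\tfrac{a}{b})$, use the identity $\beta(1,s+1)=\beta(s+1,1)=\tfrac{1}{s+1}$, and pull the common factor $b^{-2q}$ out of the $q$-th root as $b^{-2}$. Combining $b^{-2}$ with the prefactor $\tfrac{ab(b-a)}{2}$ gives $\tfrac{a(b-a)}{2b}$, which is exactly the prefactor appearing in Theorem \ref{FS2}. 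There is no genuine obstacle here: the argument is pure substitution, and the only minor bookkeeping step is the beta-function evaluation together with the rearrangement of the $b^{-2q}$ factor — both elementary.
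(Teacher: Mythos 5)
Your proposal is correct and matches the paper's (implicit) treatment: the corollary is obtained by direct substitution of $m=1$ into Theorem \ref{II4}, and the paper offers no further argument. Your added bookkeeping --- evaluating $\beta(1,s+1)=\beta(s+1,1)=\tfrac{1}{s+1}$ and extracting $b^{-2q}$ from the bracket to turn $\tfrac{ab(b-a)}{2}$ into $\tfrac{a(b-a)}{2b}$ --- correctly verifies the claimed identification with Theorem \ref{FS2}, a step the paper asserts without detail.
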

\begin{corollary}
If we take $s = 1$ in above Theorem, then we get
$$\vast| \frac{f(a) + f(b)}{2} - \frac{ab}{b - a} \int_{a}^{b} \frac{f(x)}{x^{2}}dx \vast|$$
$$\leq \frac{ab(b - a)}{2} \big( \frac{1}{p + 1} \big)^{\frac{1}{p}} \big[ \nu_{1}(1,q;a,b)|f'(a)|^{q} + m \nu_{2}(1,q;a,b)|f'(\frac{b}{m})|^{q} \big]^{\frac{1}{q}}$$

\end{corollary}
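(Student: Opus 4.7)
The statement to prove is a direct corollary of Theorem \ref{II4}, obtained by specializing the parameter $s$ to the value $1$. Since Theorem \ref{II4} has already been established and applies to every $s\in[0,1]$, the plan is simply to verify that the hypotheses of the theorem are satisfied at $s=1$ and then substitute $s=1$ into its conclusion, leaving the functions $\nu_1$ and $\nu_2$ symbolically as $\nu_1(1,q;a,b)$ and $\nu_2(1,q;a,b)$.

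First I would check the hypothesis. Harmonically $(s,m)$-convexity in the second sense with $s=1$ is precisely harmonically $m$-convexity (as noted in the remark after the definition). Thus assuming $|f'|^q$ is harmonically $m$-convex on $[a,b/m]$ is exactly the case $s=1$ of the harmonically $(s,m)$-convex hypothesis of Theorem \ref{II4}; the remaining assumptions (differentiability on $I^\circ$, $ma,b\in I^\circ$, $a<b$, $m\in(0,1]$, $f'\in L[a,b]$, $q>1$ with $\tfrac1p+\tfrac1q=1$) are inherited unchanged.

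Next, I would invoke Theorem \ref{II4} with $s=1$. Its conclusion reads
$$\left|\frac{f(a)+f(b)}{2}-\frac{ab}{b-a}\int_a^b\frac{f(x)}{x^2}\,dx\right|\le \frac{ab(b-a)}{2}\Big(\frac{1}{p+1}\Big)^{\frac{1}{p}}\Big[\nu_1(s,q;a,b)|f'(a)|^q+m\,\nu_2(s,q;a,b)\Big|f'\Big(\frac{b}{m}\Big)\Big|^q\Big]^{\frac{1}{q}}.$$
Setting $s=1$ produces exactly the displayed inequality in the corollary, with the two integrals inside the brackets becoming
$$\int_0^1\frac{t}{(tb+(1-t)a)^{2q}}\,dt=\nu_1(1,q;a,b),\qquad \int_0^1\frac{1-t}{(tb+(1-t)a)^{2q}}\,dt=\nu_2(1,q;a,b),$$
which are the $s=1$ evaluations of the Beta/hypergeometric expressions $\nu_1(s,q;a,b)$ and $\nu_2(s,q;a,b)$ recorded in Theorem \ref{II4}.

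There is no real obstacle: the entire content of the corollary is the substitution $s=1$, so the work is purely notational. The only point to be slightly careful about is aligning the formulas for $\nu_1,\nu_2$ (written with general $s$ via $\beta(1,s+1)$, $\beta(s+1,1)$ and hypergeometric factors ${}_2F_1(2q,1;s+2,\cdot)$, ${}_2F_1(2q,s+1;s+2,\cdot)$) with their $s=1$ specializations, noting that $\beta(1,2)=\beta(2,1)=\tfrac12$ so that, if desired, $\nu_1(1,q;a,b)$ and $\nu_2(1,q;a,b)$ admit the simplified forms $\tfrac{1}{2b^{2q}}\,{}_2F_1(2q,1;3,1-\tfrac{a}{b})$ and $\tfrac{1}{2b^{2q}}\,{}_2F_1(2q,2;3,1-\tfrac{a}{b})$, in agreement with the $\mu_1,\mu_2$ expressions appearing in the remark following Theorem \ref{FS2}.
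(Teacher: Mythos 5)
Your proposal is correct and matches the paper's (implicit) argument exactly: the corollary is nothing more than the substitution $s=1$ into Theorem \ref{II4}, and the paper itself offers no further proof. Your additional check that $\beta(1,2)=\beta(2,1)=\tfrac12$ and the resulting simplified forms of $\nu_{1}(1,q;a,b)$ and $\nu_{2}(1,q;a,b)$ is a harmless bonus beyond what the paper records.
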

\begin{corollary}
If we take $s = m = 1$ in above Theorem, then we get
$$\vast| \frac{f(a) + f(b)}{2} - \frac{ab}{b - a} \int_{a}^{b} \frac{f(x)}{x^{2}}dx \vast|$$
$$\leq \frac{ab(b - a)}{2} \big( \frac{1}{p + 1} \big)^{\frac{1}{p}} \big[ \nu_{1}(1,q;a,b)|f'(a)|^{q} +  \nu_{2}(1,q;a,b)|f'(b)|^{q} \big]^{\frac{1}{q}}$$
This is Theorem \ref{I2} proved by Imdat Iscan in \cite{Ak}.
\end{corollary}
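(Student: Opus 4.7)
The plan is to derive the stated corollary by directly specializing Theorem~\ref{II4} at $s=m=1$ and then verifying, step by step, that the resulting inequality coincides with the inequality of Theorem~\ref{I2}. There are three things to check: that the hypotheses reduce correctly, that the prefactor matches, and that the coefficients $\nu_{1}(1,q;a,b)$ and $\nu_{2}(1,q;a,b)$ agree with Iscan's $\mu_{1}$ and $\mu_{2}$.

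First I would observe that harmonic $(1,1)$-convexity in the second sense is ordinary harmonic convexity (the $m=1$ case of the Remark following the definition of harmonic $(s,m)$-convexity), and that the interval $[a,b/m]$ collapses to $[a,b]$ when $m=1$. Hence the hypothesis ``$|f'|^{q}$ is harmonically $(s,m)$-convex on $[a,b/m]$'' becomes exactly the hypothesis of Theorem~\ref{I2}. The prefactor $\frac{ab(b-a)}{2}(\frac{1}{p+1})^{1/p}$ is identical in both statements, and setting $m=1$ turns $m\,\nu_{2}(1,q;a,b)|f'(b/m)|^{q}$ into $\nu_{2}(1,q;a,b)|f'(b)|^{q}$. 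So the only nontrivial content is the identification of the coefficients.

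For that identification I would go back to the defining integrals from the proof of Theorem~\ref{II4}, namely
$$\nu_{1}(1,q;a,b)=\int_{0}^{1}\frac{t\,dt}{(tb+(1-t)a)^{2q}},\qquad \nu_{2}(1,q;a,b)=\int_{0}^{1}\frac{(1-t)\,dt}{(tb+(1-t)a)^{2q}},$$
and evaluate each by the substitution $u=tb+(1-t)a$, which gives $t=(u-a)/(b-a)$, $1-t=(b-u)/(b-a)$, $du=(b-a)\,dt$. Then $\nu_{1}(1,q;a,b)$ becomes $\tfrac{1}{(b-a)^{2}}\int_{a}^{b}\bigl(u^{1-2q}-a\,u^{-2q}\bigr)\,du$, a pair of elementary power integrals; evaluating, placing over the common denominator $2(b-a)^{2}(1-q)(1-2q)$, and factoring $b^{1-2q}$ out of the two $b$-terms reproduces the closed form $\mu_{1}=\frac{a^{2-2q}+b^{1-2q}[(b-a)(1-2q)-a]}{2(b-a)^{2}(1-q)(1-2q)}$ stated in Theorem~\ref{I2}. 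The computation for $\nu_{2}(1,q;a,b)$ is entirely parallel and produces $\mu_{2}$. Alternatively, one can first recognize via Euler's integral representation that $\nu_{1}(1,q;a,b)=\tfrac{1}{2b^{2q}}\,{}_{2}F_{1}(2q,1;3,1-a/b)$ and $\nu_{2}(1,q;a,b)=\tfrac{1}{2b^{2q}}\,{}_{2}F_{1}(2q,2;3,1-a/b)$ (using $\beta(1,2)=\beta(2,1)=\tfrac12$), and then verify the match with $\mu_{1},\mu_{2}$ by the same power-integral calculation.

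The main obstacle is purely algebraic bookkeeping: the expressions for $\mu_{1}$ and $\mu_{2}$ in Theorem~\ref{I2} are presented in a compact form involving $(b-a)(1-2q)\pm a$ and $(b-a)(1-2q)+b$, so one must carefully regroup the terms of $u^{2-2q}\big|_{a}^{b}$ and $u^{1-2q}\big|_{a}^{b}$ (dividing by $2(1-q)$ and $1-2q$ respectively and expanding the product $b^{1-2q}(b-a)=b^{2-2q}-ab^{1-2q}$) to land on exactly the published numerators. No analytic difficulty arises; with the substitution above in hand, the corollary is a direct arithmetic verification, and the identification $\nu_{i}(1,q;a,b)=\mu_{i}$ completes the proof.
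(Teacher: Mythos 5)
Your proposal is correct and follows the same route as the paper: the corollary is obtained by the direct substitution $s=m=1$ in Theorem~\ref{II4}, and the paper offers nothing beyond that substitution. Your explicit verification that $\nu_{1}(1,q;a,b)=\mu_{1}$ and $\nu_{2}(1,q;a,b)=\mu_{2}$ via the change of variable $u=tb+(1-t)a$ is accurate (and is a worthwhile addition, since the paper merely asserts the identification; note that the paper's own earlier Remark lists the hypergeometric forms of $\mu_{1}$ and $\mu_{2}$ with the second parameters interchanged relative to the pairing your computation establishes).
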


\end{document}